\tikzset{cross/.style={cross out, draw=black, minimum size=2*(#1-\pgflinewidth), inner sep=0pt, outer sep=0pt},
cross/.default={1pt}}
\newif\ifcomments
\definecolor{mycolor}{RGB}{20, 20, 122}
\newcommand{\mres}{\mathbin{\vrule height 1.6ex depth 0pt width
0.13ex\vrule height 0.13ex depth 0pt width 1.3ex}}
\numberwithin{equation}{section}
\theoremstyle{plain}
\newtheorem{remark}{Remark}
\newtheorem{theorem}{Theorem}
\newtheorem{proposition}[theorem]{Proposition}
\def\ep{\varepsilon}
\def\si{\sigma}
\def\X{{\mathcal X}}
\def\Y{{\mathcal Y}}
\def\Sc{{\mathcal S}}
\def\P{{\mathcal P}}
\def\L{\mathcal{L}}
\def\Ec{\mathcal{E}}
\def\Dc{\mathcal{D}}
\def\R{{\mathbb R}}
\def\Cc{{\mathcal C}}
\def\Pc{{\mathcal P}}
\def\Kc{{\mathcal K}}
\def\J{{\mathcal J}}
\def\Id{\operatorname{Id}}
\newcommand{\SI}[2]{ \int   #1  \,  #2  }
\def\u{{\varphi}}
\def\v{{\psi}}
\def\ue{\varphi_\ep}
\def\ve{\psi_\ep}
\def\us{u_\ep}
\def\he{h_\ep}
\def\uek{\varphi_\ep^k}
\def\uekp{\varphi_\ep^{k+1}}
\def\vekp{\psi_\ep^{k+1}}
\def\Ke{K_\ep}
\newcommand{\pp}[2]{\frac{\partial #1}{\partial #2}}
\DeclareMathOperator{\diff}{d}
\DeclareMathOperator{\OT}{OT}
\newtheorem{assumption}{Assumption}
\newif\ifsolns
\newcommand{\soln}[1]{\newline \noindent {\bfseries Solution:} {\itshape #1}}
\newcommand{\soln}[1]{}
\newcommand{\changes}[1]{%
\ifthenelse{\boolean{colorText}}{\textcolor{blue}{#1}}{#1}%
}
\newcommand{\norm}[1]{\lVert #1 \rVert}
\begin{document}


\title{Entropic approximations of the semigeostrophic shallow water
  equations}

\author{
    Jean-David Benamou \\
    \textsc{Inria Paris}, MOKAPLAN Team \\
    48 Rue Barrault\\
    75013 Paris, France \\
    \texttt{jean-david.benamou@inria.fr}
    \and 
    Colin J. Cotter \\
    \textsc{Imperial College London}, Department of Mathematics \\
    South Kensington Campus \\
    London SW7 2AZ, United Kingdom  \\
    \texttt{colin.cotter@imperial.ac.uk}
    \and 
    Jacob J.M. Francis \\
     \textsc{Imperial College London}, Department of Mathematics \\
    South Kensington Campus \\
    London SW7 2AZ, United Kingdom  \\
    \texttt{jacob.francis18@imperial.ac.uk}
    \and 
    Hugo Malamut \\
    \textsc{Inria Paris}, MOKAPLAN Team \\
    48 Rue Barrault\\
    75013 Paris, France
    \texttt{hugo.malamut@inria.fr}
}

\maketitle


\begin{abstract}
  We develop a discretisation of the semigeostrophic rotating shallow
  water equations, based upon their optimal transport formulation.
  This takes the form of a Moreau-Yoshida regularisation of the
  Wasserstein metric. 
  Solutions of the optimal transport formulation provide the shallow water layer
  depth represented as a measure, which is itself the push forward of an
  evolving measure under the semigeostrophic coordinate
  transformation. First, we propose and study an entropy
  regularised version of the rotating shallow water equations. Second, we
  discretise the regularised problem by replacing both measures with
  weighted sums of Dirac measures, and approximate the (squared)
  $L^2$ norm of the layer depth, which defines the potential energy.  We
  propose an iterative method to solve the discrete optimisation
  problem relating the two measures, and analyse its convergence. The
  iterative method is demonstrated numerically and applied to the
  solution of the time-dependent shallow water problem in numerical
  examples.
\end{abstract}



\section{Introduction} 

The semigeostrophic (SG) approximation describes the large scale evolution of fluid flows in the limit of
the Rossby number (measuring relative size of the advection and Coriolis terms) going to zero in the distinguished limit where the 
Froude number (a ratio of transport to wave propagation timescales) is proportional to the square
of the Rossby number. Originally proposed to explain the formation and subsequent evolution of atmospheric fronts \citep{hoskins1971atmospheric}, the SG equations have 
been proposed more recently as a tool for understanding weather models in the SG limit \citep{cullen2007modelling,cullen2018use}. The SG
equations are advantageous since they do not explicitly support fast wave motions, and they can be solved numerically using optimal transport techniques that do not require
numerical dissipation for stability, even in the presence of fronts. These techniques were originally formulated via the "geometric algorithm" of \citet{cullen1993geometric}.
At the time, these calculations were limited by available computer power. Recently, there have been significant advances in computational methods driving efforts to revisit the optimal transport approach for the SG equations. The geometric algorithm 
approximates the source density as a weighted sum of Dirac masses and computes the optimal transport to a Lebesgue measure in the target domain (referred to as a ``semidiscrete" optimal transport problem).
This corresponds to the construction of Laguerre cells (also known as power diagrams) which have subsequently been well studied, i.e. \citet{merigot,kitagawa2019convergence} who suggest a damped Newton method. This semidiscrete method was used to modernise the geometric algorithm in \citet{BourneP,egan2022new,lavier2024semi}. Entropic regularisation is another popular approach for fully discrete problems where the source and the target measures are both weighted sums of Dirac masses. Here, through the addition of a Kullback-Leibler regularisation term, scaled by a small parameter $\epsilon$, is solved \emph{via} the dual problem and the Sinkhorn iterative procedure. The iterations alternate on the optimality conditions for the two dual potentials in the problem. The Sinkhorn iteration is well suited to fast implementation on GPUs \citet{cuturi2013sinkhorn},
and can be accelerated using scaling techniques \citep{Schmitzer,chizat2018scaling}. Further, the $\mathcal{O}(\epsilon \log(\epsilon))$ error due to regularisation can be efficiently corrected (``debiased") to $\mathcal{O}(\epsilon^2)$ \citep{Feydy}.
\cite{BCM24} applied this approach to the incompressible Boussinesq SG equations in the Eady vertical slice configuration. 

In this paper we present a fully discrete optimal transport approach to the rotating shallow water SG equations, as a stepping stone towards the compressible Euler SG equations.
The shallow water and the compressible Euler equations share a feature which is that the divergence-free condition, imposing incompressibilty of the flow, is replaced by a density that is transported by the flow according to the continuity equation. (In the case of the shallow water model, the ``density" is the volume of water per unit horizontal area which can change even though the fluid is assumed incompressible due to the vertical motion of the upper surface.) For the shallow water and Euler equations, the optimal transport problem associated to the SG approximation is generalised by removing the constraint on the target density and adding a potential energy term that penalises it instead \citep{shutts1987parcel}. The resulting formulation for the shallow water SG model was rigorously analysed in \citet{cullen2001variational}, 
and \citet{cullen2003fully} for the compressible Euler SG model. \citet{bourne2025semi} proposed and analysed an extension of the geometric algorithm to the compressible Euler SG model.
This paper is an adaptation to the entropic solution 
approach of the SG equation \citep{BCM24} to the SG shallow water equations. {However, entropy debiasing raises  new and interesting questions that are only tackled partially here}.
The optimal transport and entropic optimal transport tools  were described 
in \citep{BCM24} and we will refer the reader to this paper when needed.

The rest of this paper is structured as follows. Section 2 revisits the shallow water SG model derivation using OT tools. Section 3 presents the Entropic regularisation, debiasing, and solution algorithms.
Finally, numerical results and comments are given in Section 4.

\section{The OT formulation of the SWSG equations} 
\label{oft} 

\subsection{Shallow water (SW) equations} 

The shallow water equations are a standard model of geophysical fluid dynamics which can serve as a simplified model of the ocean or a layer in the atmosphere.
They are derived under the assumption of an inviscid, incompressible fluid in 3D with a free surface, in hydrostatic balance and assuming columnar motion.
Here, the fluid is defined on 2D planar geometry (in Cartesian coordinates $x_1,x_2$)  with Coriolis parameter $f$ and gravity parameter $g$ both assumed constant.
Then the shallow water equations for unknown velocity $U_t= (u_1,u_2)$  and height $h$, are given by
\begin{align}
\label{eq:3d B u}  \frac{Du_1}{Dt} - fu_2 & = - g \,  \frac{\partial h}{\partial x_1}, \\
\label{eq:3d B v}  \frac{Du_2}{Dt} + fu_1 & = - g\, \frac{\partial h}{\partial x_2}, \\
\label{eq:3d B h}  \frac{Dh }{Dt}  &  = -  h \, (  \frac{\partial u_1}{\partial x_1}  +\frac{\partial u_2}{\partial x_2}  ).
\end{align}
The first two equations are Newton's balance of forces and the third is the mass conservation expressed in terms of the water column height, \(h\). These equations are accompanied by
initial conditions for $U_0$ and $h$.
In this paper we consider the solution in a {\em convex} domain $\Omega \subset \R^2$
with  boundary $\partial\Omega$ with either  periodic or  rigid   boundary conditions $(u_1,u_2)\cdot
n=0$, where $n$ is the unit outward pointing normal to the boundary, or a mix of the two.

We recall the definition of the material derivative 
\begin{equation}
  \label{eq:D/Dt}
  \frac{D}{Dt} = \frac{\partial}{\partial t}
  + u_1\frac{\partial}{\partial x_1}
  + u_2\frac{\partial}{\partial x_2},
  \end{equation}
so that (\ref{eq:3d B h}) can also be written as a Eulerian conservative continuity equation,
\begin{align}
\label{ceh} 
\frac{\partial}{\partial t}  h + \frac{\partial}{\partial x_1} (h \, u_1) + \frac{\partial}{\partial x_2} (h \, u_2) = 0.
\end{align}
The total volume of water is conserved but the  height of water columns  varies according to the 
non divergence free velocity. We will model  $t \rightarrow h_t$ as a curve of densities over $\Omega$.

\subsection{SG SW (SGSW)  equations} 
\label{sgsw} 
The geostrophic approximation, which holds at large scales for slow moving flows,  neglects $Du_1/Dt$ and $Du_2/Dt$ in
(\ref{eq:3d B u}-\ref{eq:3d B v}), leading to the (divergence free) geostrophic velocity $U_{t,g} = (u_{1,g},u_{2,g})$
defined according to
\begin{equation}
  \label{eq:geobal}
  -f\, u_{2,g} = - g \, \frac{\partial h}{\partial x_1}, \quad
  f \, u_{1,g }= -\ g\, \frac{\partial h }{\partial x_2}.
\end{equation}
However, this is a purely diagnostic equation that does not predict
dynamics. In the {\em semi}geostrophic  approximation (see \cite{cullen2006mathematical} for a historical review) of (\ref{eq:3d B
  u}-\ref{eq:3d B v}) we neglect the acceleration of the ``ageostrophic'' part of the velocity $U_{ag} := U-U_g$,  and hence we replace $Du_1/Dt$, $Du_2/Dt$ by $Du_{1,g}/Dt$,
$Du_{2,g}/Dt$ (whilst retaining $u_1,u_2$ in \ref{eq:D/Dt}),
leading to
\begin{align}
\label{eq:3d sg u}  \frac{Du_{1,g}}{Dt} - fu_2 & = - g\, \frac{\partial h}{\partial x_1} {= -f u_{2,g}}, \\
\label{eq:3d sg v}  \frac{Du_{2,g}}{Dt} + fu_1 & = -  g \frac{\partial h }{\partial x_2} { = f u_{1,g}}, \\
\label{eq:3d sg h}  \frac{Dh }{Dt}  &  = -  h \, (  \frac{\partial u_1}{\partial x_1} +\frac{\partial u_2}{\partial x_2}  ).
\end{align}
To clarify how this equation might be solved, we use \ref{eq:geobal} to eliminate $u_{1,g}$, $u_{2,g}$  to obtain
\begin{align}
\label{eq:3d sgp u}  \frac{D}{Dt}\left(\frac{g}{f}\frac{\partial h}{\partial x_2} + f \, x_2\right) & =  g\, \frac{\partial h}{\partial x_1} , \\
\label{eq:3d sgp v}  \frac{D}{Dt}\left(\frac{g}{f}\frac{\partial h}{\partial x_1} + f \, x_1\right) & = -  g\, \frac{\partial h }{\partial x_2}    , 
\end{align}
whilst the last equation (\ref{eq:3d sg h}) is unchanged.

The semi-geostrophic  shallow water system is (\ref{eq:3d sg h}-\ref{eq:3d sgp v})  with unknowns 
 $(u_1,u_2,h )$  which are functions of $(t,x_1,x_2)$  (time and space). Note here that given $h$, ($u_1,u_2$) can be obtained from (\ref{eq:3d sgp u}-\ref{eq:3d sgp v}),
 and hence initial conditions are required for $h$ only. This reflects the SG approximation as a next order correction to the geostrophic balance condition
 that determines $u$ from $h$.
 
\subsection{Hoskins' transform and Cullen stability principle}

The optimal transport formulation  is  based on the change of  {\em physical coordinates} $X = (x_1 ,x_2 ) \in \Omega$ 
into  \emph{geostrophic coordinates} $Y = (y_1,y_2)$\footnote{Note that the geostrophic coordinates were denoted $G$ in \citep{BCM24}, we use $Y$ to avoid confusion with the gravity constant $g$.} , also known as Hoskins' transformation \citep{hoskins1975geostrophic},  
defined by
\begin{equation}
\label{cov}
  y_1 = x_1 +   \frac{g}{f^2}\frac{\partial h}{\partial x_1} , \quad  y_2 = x_2 +   \frac{g}{f^2}\frac{\partial h}{\partial x_2} .
  \end{equation}
The geostrophic domain  is the  image, deforming in time,  of the physical domain
under this map. We notice that it is a gradient 
\[
 Y   = \nabla P_t(X) ,
\]
where 
\begin{equation}
  \label{cov0}
  P_t (X)  = \frac{1}{2} \| X \|^2  +\frac{g}{f^2} \,  h_t(X). 
\end{equation}
Since $h$ depends on time, so does the domain in geostrophic coordinates $P_t$.

Under this change of variables,  (\ref{eq:3d sgp u}-\ref{eq:3d sgp v}) becomes
\begin{align}
\label{eg}\frac{D Y }{Dt} & =  f   \underbrace{\begin{pmatrix}
    0 & 1  \\
{-}1 & 0 
  \end{pmatrix}}_{=J}  ( Y -  X)  =   \, U_{t,g} (X)  , 
\end{align}
where the advective derivative (\ref{eq:D/Dt})  is still governed by the physical velocity $U_t = (u_1,u_2)$.\\

It is convenient from there to   switch to a  Lagrangian
description of the equations coupling particles in the physical and geostrophic domains. 
The Lagrangian description of fluid
dynamics is formulated in terms of a time dependent flow map
$\X_t:\Omega \to \Omega$ such that $\X_t(\X_0)$ describes the time
evolution of a moving fluid particle for each fixed $\X_0 \in
\Omega$.    Let us consider  particles  $\X_t$ moving  with velocity $U_t =(u_{1},u_2)$ and 
their images $ \Y_t(\Y_0)  = \nabla P_t (\X_t ( \X_0 ) )$ in geostrophic space.  
From (\ref{eq:D/Dt}-\ref{eg}) 
we obtain the following system of ODEs,
\begin{align} 
\label{Lx}
  \pp{}{t}\X_t & =  U_t (\X_t) , \\
\label{Ly}  
  \pp{}{t}\Y_t  & =  f\, J \cdot (  \Y_t -  \X_t)  = U_{t,g}  (\X_t ) .
\end{align} 
This is complemented by the Lagrangian form of (\ref{ceh}), which we reformulate now as follows. 
Let  $\mu = \L \mres \Omega$ where $\L$ is the Lebesgue measure on $\Omega$ and define, for  all time $t$,   $(h_t \mu) $ as  the {\em pushforward measure} of $ (h_0 \mu) $ by the  flow map $\X_0 \mapsto \X_t$
satisfying
$$
\int_B h_t d\mu  = \int_{\X_t^{-1}(B)} h_0 d\mu, 
\mbox{ for all $\mu$ measurable sets $B$. }
$$
 We will use the shorthand  notation for the pushforward,
 \begin{equation} 
\label{Lh}    (h_t\,\mu)   = (\X_t)_\# \, (h_0 \, \mu).
\end{equation} 
The Lagrangian system (\ref{Lx}-\ref{Lh})  is closed by (\ref{cov0})  applied to $(\X_t, \Y_t)$,
\begin{equation}
\label{cov2}
  \Y_t = \X_t +    \frac{g}{f^2}\nabla  h_t(\X_t),
  \end{equation}
as  $U_t$ is also an unknown.   \\

A dynamic solution in geostrophic coordinates alone,  can be  constructed, based on an 
additional assumption now known as the Cullen Stability Principle.
\begin{assumption}(Cullen Stability Principle)
\label{csp} 
The solution of (\ref{eq:3d sg u}-\ref{eq:3d sg h}) \ is such that 
 $P_t$ (depending on $h$  as in \eqref{cov0})  remains  a strictly convex potential for all times in $[0,T]$.  
 In other words, $h$ is $(-f/g)$-convex.
\end{assumption}
Then the  map $ X \rightarrow  \nabla P_t(X) $  is injective and  bijective onto $ \nabla P_t(\Omega)$. Its inverse is given by   $ Y \rightarrow \nabla Q_t(Y)  $ where $Q_t = P_t^*$ is the Legendre-Fenchel transform of $P_t$.  

If we are given $Q_t$, we can decouple 
the dynamics in geostrophic coordinates, according to 
\begin{align} 
\label{Ly2}  
  \pp{}{t}\Y_t  & =  f\, J \cdot (  \Y_t -  \nabla Q_t(\Y_t)) = 
   U_{t,g}  ( \nabla Q_t(\Y_t) ) ),
\end{align} 
the last equality being a consequence of (\ref{cov2}).
The computation of $Q_t$ independently of $\X_t$ is explained in the next section.

\begin{remark} 
We can also return to a Eulerian description of 
the flow in geostrophic coordinates. For an initial geostrophic distribution $\sigma_0$, the curve in time of measures 
$\sigma_t  = (\Y_t)_\# \sigma_0 $  is a distributional solution of 
\begin{align} 
\label{DY}  
  \pp{}{t} \sigma_t (Y) + \nabla \cdot ( \sigma_t (Y) \,  U_{t,g}  ( \nabla Q_t(Y) ) = 0.
\end{align} 
\end{remark}

\subsection{Optimal Transport Formulation}

The following result links the  flows in 
physical and geostrophic coordinates, showing that 
the dynamics in physical coordinates can be 
recovered from the geostrophic one assuming the maps 
$\{\nabla Q_t\}$ are given.
\begin{proposition} \label{EQI} 
Let $P_0$ and $h_0$  be an initial convex potential and height defined over $\Omega$ and define $ \sigma_0 =  (\nabla P_0)_\# \, (h_0 \, \mu) $. 
Additionally, consider the family of maps $\{ Y_0 \rightarrow \Y_t\}_t $ (a solution of (\ref{Ly}), for instance) and 
 a  family of maps   $\{\nabla Q_t\}$ defined on 
the support of $\sigma_t$.

Defining 
\[ \begin{array}{c} 
\sigma_t =  (\Y_t)_\# \sigma_0 \\[8pt]
\X_0 \rightarrow \X_t(\X_0) := \nabla Q_t \circ \Y_t \circ \nabla P_0 (\X_0), 
\end{array} 
\] 
and assuming that
\begin{equation}
\label{A1} 
(h_t   \mu)  = (\nabla Q_t )_\#  \sigma_t,
\end{equation}
implies that
 \[
     (h_t   \mu ) =  (\X_t)_\#  (h_0  \mu) .
\]
\end{proposition}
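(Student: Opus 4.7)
The statement reduces, essentially, to applying the functoriality of pushforward, namely the identity $(F \circ G)_\# \nu = F_\# (G_\# \nu)$, to the chain of maps $\nabla P_0$, $\Y_t$, $\nabla Q_t$. The plan is thus to start from the hypothesis \eqref{A1} and substitute backwards through the three definitions until the composed map $\X_t$ is exposed.

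More concretely, I would argue as follows. By hypothesis \eqref{A1},
\[
  h_t\,\mu \;=\; (\nabla Q_t)_\#\, \sigma_t.
\]
By the definition $\sigma_t = (\Y_t)_\# \sigma_0$ and functoriality of pushforward,
\[
  (\nabla Q_t)_\#\, \sigma_t \;=\; (\nabla Q_t)_\# (\Y_t)_\# \sigma_0 \;=\; (\nabla Q_t \circ \Y_t)_\# \sigma_0.
\]
Using in turn $\sigma_0 = (\nabla P_0)_\#(h_0\,\mu)$ and applying functoriality once more,
\[
  (\nabla Q_t \circ \Y_t)_\# \sigma_0 \;=\; (\nabla Q_t \circ \Y_t \circ \nabla P_0)_\# (h_0\,\mu).
\]
Recognising the composition as $\X_t$ by the definition given in the statement yields the desired identity $(h_t\,\mu) = (\X_t)_\# (h_0\,\mu)$.

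The only points that require a brief comment are of a bookkeeping nature rather than a genuine obstacle. First, I would note that each composition above is well-defined: $\nabla P_0$ maps $\mathrm{supp}(h_0\,\mu) \subset \Omega$ into $\mathrm{supp}(\sigma_0)$, the family $\Y_t$ propagates this support to $\mathrm{supp}(\sigma_t)$, and $\nabla Q_t$ is by hypothesis defined on the latter; measurability of each factor then gives measurability of $\X_t$ and legitimises the repeated application of the pushforward composition rule. Second, one should underline where assumption \eqref{A1} is actually used: without it, one only obtains the identity $(\X_t)_\#(h_0\,\mu) = (\nabla Q_t)_\#\sigma_t$ by pure transport of measure, and \eqref{A1} is precisely what identifies this common measure with $h_t\,\mu$. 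The ``hard'' part of the theory lies in Assumption~\ref{csp} and in establishing that \eqref{A1} can indeed be enforced by an appropriate optimal transport construction; within the present proposition itself, however, both are taken as given, so the proof is purely formal.
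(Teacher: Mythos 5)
Your argument is correct and is essentially the same as the paper's: the paper simply records the chain $(\nabla P_0)_\#$, $(\Y_t)_\#$, $(\nabla Q_t)_\#$ and the functoriality of pushforward as a pair of commutative diagrams, while you unpack the same composition step by step in prose. The extra bookkeeping remarks you add (well-definedness of the composed map, where \eqref{A1} enters) are sound but inessential, and the core mechanism is identical.
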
 
\begin{proof} 
The result follows from the decomposition of pushforward for composition of maps, as follows.
\[
\begin{tikzcd}
\Y_t(\Y_0) \arrow[r, " \nabla Q_t "] &   \X_t(\X_0)  \\
\Y_0 \arrow[u, " t " ]     & \X_0 \arrow[u, " t "']     \arrow[l, " \nabla P_0 " ]    \end{tikzcd} 
\quad \quad 
\begin{tikzcd}
 \sigma_t \arrow[r, " (\nabla Q_t)_\# "] &   h_t \, \mu    \\
\sigma_0 \arrow[u, " (\Y_t)_\# " ]     &    h_0 \,  \mu  \arrow[u, " (\X_t)_\# "']     \arrow[l, " (\nabla P_0)_\# " ]    \end{tikzcd}
\]
\end{proof} 

We  now  need to define 
the family of  maps $(\nabla Q_t)_t$ and heights $(h_t)_t$ 
that are compatible with 
assumptions  (\ref{cov0}-\ref{A1}) and (CSP).  This was obtained based on energy minimisation  considerations  in \citet{cullen1989properties, cullen2001variational,cullen2006mathematical}. It is also contained in  
 the more recent ``unbalanced"   optimal Transport problem
\citep{chizat2017unbalanced} or the concept of 
 Moreau-Yoshida envelope in Wasserstein space \citep{sarrazin2022lagrangian}.
This is summarised in the following proposition.
\begin{proposition}[] 
\label{msr} 
Given a measure $\sigma$ and $\mu$ the volume measure on $\Omega $ defined as above, we consider the minimisation problem  
\begin{align} 
 \label{e3} &  \inf_{  
 h  \in \P(\Omega) }   \Ec_{\sigma} (h) ,
  \end{align}
  where 
  \begin{equation}
\label{energies} 
\begin{array}{l}
\Ec_\si = \Kc_\sigma + \Pc, \\[10pt]
\Kc_{\sigma} (h) =   \ f^2     \, \OT(h \, \mu, \sigma)
:= f^2\inf_{G,\sigma=G\#h\mu} \frac{1}{2}\int_\Omega \|G(X)-X\|^2
h\mu(\diff X), 
   \\[10pt]
   \Pc (h) =\dfrac{ g}{2}  \,  \int_\Omega  \| h  (X)  \|^ 2 \, d\mu(X), \mbox{  if $h  \in \L^2_{\mu}(\Omega)$ and $+\infty$ otherwise, }
\end{array}
\end{equation}
are the total, kinetic and potential  energies respectively, and  $\OT$ is the usual 2-Wasserstein distance squared (see Proposition 2 in \citep{BCM24}). 
The following properties hold.
 \begin{itemize} 
\item[(i)] (\ref{e3}) has a unique solution and also   admits the dual formulation,
\begin{equation}
  \label{dual2} 
  \inf_{  
 h  \in  \P(\Omega) }   \Ec_{\sigma} (h)   = \sup_{
    \begin{array}{c} \u \in \Cc(\Omega), \v \in \Cc(\R^d)  \mbox{ s.t. } \\ 
      \u(X) +  \v(Y) \le  {\frac{1}{2}} \,  \| Y-X \|^2, \, \forall (X,Y)  \end{array}
  }
 \Dc_\sigma(\u,\v),
\end{equation} 
where
$$
\Dc_\sigma(\u,\v) := \int_{} \v  \, \sigma( \diff Y)  -  \frac{f^2}{2g} \int_\Omega  \| \u (X)  \|^ 2 \, d\mu(X).   
$$
\item[(ii)] Strong Fenchel Rockafellar duality holds 
 and there exist unique 
minimisers/maximisers satisfying 
\begin{equation}
  \label{danskins} 
  h  = -\partial_{\u} \Dc_\sigma(\u,\v) = - \frac{f^2}{g}  \u ,
   \quad \quad   \v = \partial_{\sigma} \Ec_{\sigma} (h).
\end{equation}

\item[(iii)] The optimal $(\u,\v)$  are  the Kantorovich potentials for the optimal transport problem between $(h\mu)$ and $\sigma$.  The 
associated ``Brenier'' potentials,
\[ \mbox{ $ P(X) = \dfrac{\| X \|^2}{2} - \u (X)  $  and  $Q(Y) =  \dfrac{\| Y \|^2}{2} - \v (Y) $},\]
satisfy 
\[
\begin{array}{c}
  \mbox{$P$ is convex,  $Q = P^*$, $ (h\mu) = (\nabla Q )_\#  \sigma$ and    $ \sigma  = (\nabla P )_\# (h\mu) $  }. 
\end{array}
\]
   \end{itemize} 
\end{proposition}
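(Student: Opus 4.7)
The approach combines three classical ingredients: the direct method in the calculus of variations for~(i), Kantorovich duality for the Wasserstein term together with Fenchel--Legendre duality for the $L^2$ penalty for~(ii), and standard Brenier theory for~(iii). The overall structure is that of a Moreau--Yoshida envelope of $h \mapsto f^2\,\OT(h\mu,\sigma)$ with respect to $\Pc$ \citep{sarrazin2022lagrangian}, closely related to the unbalanced OT framework of \citet{chizat2017unbalanced}; either reference provides a template to adapt.

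For~(i), the map $h \mapsto \OT(h\mu,\sigma)$ is convex on $\P(\Omega)$ since Kantorovich duality writes it as a supremum of affine functionals in $h$, while $\Pc$ is strictly convex and coercive on $L^2_\mu(\Omega)$ and equal to $+\infty$ elsewhere. Hence $\Ec_\sigma$ is strictly convex, proper, and weakly lower semi-continuous on $L^2_\mu(\Omega)\cap\P(\Omega)$; any minimising sequence is bounded in $L^2_\mu$, extracts a weakly convergent subsequence, and strict convexity combined with lower semi-continuity identifies this limit as the unique minimiser.

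For~(ii) and~(iii), I would substitute the Kantorovich dual into the primal to obtain
\[
\inf_h \Ec_\sigma(h) \;=\; \inf_h \sup_{\bar\u(X)+\bar\v(Y)\le \tfrac{1}{2}\|Y-X\|^2}\!\Big[f^2\!\!\int \bar\u\,h\,d\mu + f^2\!\!\int \bar\v\,d\sigma + \tfrac{g}{2}\!\!\int h^2\,d\mu\Big].
\]
The inner functional is linear in $h$ and the outer one is strictly convex and coercive, so Fenchel--Rockafellar (equivalently Sion's minimax) justifies exchanging $\inf_h$ and $\sup_{\bar\u,\bar\v}$; the pointwise quadratic-linear minimisation in $h$ then gives the first-order relation $h = -\tfrac{f^2}{g}\bar\u$, which after substitution and a rescaling of the potentials delivers \eqref{dual2}, the existence of optimal $(\u,\v)$, and the first primal-dual identity in~(ii); the relation $\v = \partial_\sigma \Ec_\sigma(h)$ follows by the envelope theorem applied to the dual problem with $\sigma$ as the parameter. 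For~(iii), primal-dual optimality shows that $(\u,\v)$ coincides with a pair of Kantorovich potentials for the OT problem between $h\mu$ and $\sigma$, so they may be taken $c$-concave; defining $P(X) = \tfrac{1}{2}\|X\|^2-\u(X)$ and $Q(Y) = \tfrac{1}{2}\|Y\|^2-\v(Y)$ converts $c$-concavity into ordinary convexity, and Brenier's theorem identifies $\nabla P$ as the OT map from $h\mu$ to $\sigma$, yields $Q = P^*$, and $(\nabla Q)_\#\sigma = h\mu$.

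The most delicate step is the rigorous justification of the inf--sup exchange together with existence of a dual maximiser, since $\bar\u,\bar\v$ are a priori only measurable and the feasible set is not obviously compact in a convenient topology. I would reduce to $c$-concave potentials (a step that does not change the optimal dual value, as the Kantorovich dual restricted to $c$-concave functions has the same supremum), which on the bounded convex $\Omega$ with quadratic cost are uniformly Lipschitz and bounded up to additive constants; Arzelà--Ascoli then provides the compactness needed to realise a maximiser and to apply Fenchel--Rockafellar in the framework $L^2_\mu(\Omega)\times \Cc(\Omega)\times \Cc(\R^d)$. A further subtlety is the probability constraint $\int h\,d\mu=1$, which formally introduces a Lagrange multiplier shifting $\u$ by an additive constant; since Kantorovich potentials are only defined up to such a shift, this is absorbed into the normalisation of $\u$.
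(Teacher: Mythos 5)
The paper does not actually supply a proof of Proposition~\ref{msr}: the text preceding the statement cites \citet{cullen1989properties,cullen2001variational,cullen2006mathematical}, the unbalanced transport of \citet{chizat2017unbalanced}, and the Moreau--Yoshida envelope of \citet{sarrazin2022lagrangian}, and then ``summarises'' these results. Your argument---direct method for~(i), Kantorovich duality plugged into Fenchel--Rockafellar/Sion for~(ii), Brenier theory for~(iii)---is exactly the template those references use, and your discussion of the compactness needed for the inf--sup exchange (reduction to $c$-concave, uniformly Lipschitz potentials) is the right way to make the minimax rigorous. So as far as the paper is concerned, you are supplying the argument it omits, and the overall route is correct.

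There is, however, one genuine gap. When you minimise the inner Lagrangian $h\mapsto f^2\int\bar\u\,h\,d\mu + \tfrac{g}{2}\int h^2\,d\mu$ pointwise, you do so without the sign constraint, obtaining $h = -\tfrac{f^2}{g}\bar\u$. But the primal variable lives in $\P(\Omega)$, so $h\geq 0$ must be enforced; the unconstrained Euler--Lagrange equation only gives the stated closed form where the constraint is inactive, and otherwise produces $h = \max\bigl(0,\,-\tfrac{f^2}{g}\bar\u\bigr)$. You explicitly flag the mass constraint $\int h\,d\mu=1$ (correctly absorbed into the Kantorovich normalisation), but you never address positivity; either you need to show that the optimal $\bar\u$ satisfies $\bar\u\le 0$ $\mu$-a.e.\ (which is not automatic, since the additive normalisation is fixed by the quadratic term in $\Dc_\sigma$, not free), or you need a hypothesis---this is precisely the role of the Cullen Stability Principle in the references. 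A secondary, presentational point: the ``rescaling of the potentials'' that is supposed to convert $(\bar\u,\bar\v)$ into the $(\u,\v)$ of $\Dc_\sigma$ is asserted but not carried out, and if one tries to carry it out for general $f$ the $f$-powers do not quite balance between $\Kc_\sigma=f^2\,\OT$, $\Pc=\tfrac{g}{2}\int h^2$, $h=-\tfrac{f^2}{g}\u$, and the $\tfrac{f^2}{2g}\int\u^2$ in $\Dc_\sigma$; this looks like a constant tracking issue in the paper's own statement (it is consistent with the $f=1$ normalisation used later in \eqref{SPP} and with the $\tfrac{g}{2f^2}$ weighting in \eqref{primalreg}), but you should flag which normalisation you are actually proving.
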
 
We are now ready  to formulate the  SWSG equation in geostrophic space.
\begin{theorem}[OT SWSG] 
\label{pot} 
Assume $(h_0, \sigma_0)$  and $\mu$ are given as per Proposition \ref{EQI}. 
Further, assume that we are given  $ \{ \Y_t, h_t\} $   solutions   for all time $t \in [0,T]$  of the following system,
\begin{equation}
\label{GSys}    
\left\{ 
\begin{array}{l}
\pp{}{t}\Y_t  = f\, J \cdot  (  \Y_t  -  \nabla Q_t(\Y_t )  ) 
\quad  \Y_0 = \Id,  \quad  \sigma_t =  (\Y_t)_\# \sigma_0 ,   \\[10pt]

(\u_t,\v_t) = arg \sup_{
    \begin{array}{c} \u \in \Cc(\Omega), \v \in \Cc(\R^d)  \mbox{ s.t. } \\ 
      \u(X) +  \v(Y) \le  {\frac{1}{2}} \,  \| Y-X \|^2, \, \forall (X,Y)  \end{array}
  }
 \Dc_{\sigma_t}(\u,\v),\\[10pt]
  h_t  :=  - \dfrac{f^2}{g}  \u_t   , \quad 
 Q_t (Y)  =  \frac{\|Y\|^2}{2} - \v_t(Y)   .
\end{array}
\right.
\end{equation}

Then, for $\X_t := \nabla Q_t ( \Y_t)$, the following properties hold.
\begin{itemize} 
\item[$i)$] $ \{  \X_t  , Y_t , h_t \} $ are solutions of the full Lagrangian SWSG system  (\ref{Lx}-\ref{cov2}).
\item[$ii)$] 
$ U_{g,t}(\X_t) = f\, J \cdot (  \nabla P_t(\Y_t) -\X_t) $
where $ P_t = Q_t$.
\item[$iii)$] The geostrophic flow equation can be written in 
compact form using the Wasserstein $\sigma$ gradient, according to
\begin{equation}
\label{FlP} 
\pp{}{t}\Y_t  = f\, J \cdot  \nabla   \partial_{\sigma} \Ec_{\sigma_t} (h)  = f\, J \cdot  \nabla \v_t(\Y_t) .
\end{equation} 

 \end{itemize} 

\end{theorem}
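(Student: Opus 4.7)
The plan is to exploit Proposition \ref{msr} at each time $t$ to recognise the potentials $P_t,Q_t$ of \eqref{GSys} (together with the Hoskins potential from \eqref{cov0}) as the mutually dual Brenier potentials for the optimal transport between $(h_t\mu)$ and $\sigma_t$. Since $h_t = -\tfrac{f^2}{g}\u_t$, the Hoskins potential of \eqref{cov0} coincides with $P_t(X) = \tfrac12\|X\|^2 - \u_t(X)$, which by Proposition \ref{msr}(iii) is convex and satisfies $(\nabla P_t)_\#(h_t\mu) = \sigma_t$, $(h_t\mu) = (\nabla Q_t)_\# \sigma_t$, and $\nabla P_t = (\nabla Q_t)^{-1}$. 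Assumption \ref{csp} is precisely the guarantee that $P_t$ stays strictly convex, so this Legendre picture remains classical.

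With that identification in hand I verify the Lagrangian equations (\ref{Lx})--(\ref{cov2}) one at a time. Equation \eqref{Ly} is immediate: inserting $\X_t = \nabla Q_t(\Y_t)$ in the first line of \eqref{GSys} yields $\partial_t \Y_t = fJ(\Y_t - \X_t)$. Equation \eqref{cov2} follows by taking the gradient of $P_t(X) = \tfrac12\|X\|^2 + \tfrac{g}{f^2} h_t(X)$ at $\X_t$, giving $\Y_t = \nabla P_t(\X_t) = \X_t + \tfrac{g}{f^2}\nabla h_t(\X_t)$. For the pushforward identity \eqref{Lh}, the hypothesis $(h_t\mu) = (\nabla Q_t)_\# \sigma_t$ required by Proposition \ref{EQI} is exactly the OT identity from Proposition \ref{msr}(iii) above, and the initial consistency $\sigma_0 = (\nabla P_0)_\#(h_0\mu)$ is built into the theorem's hypotheses; the diagrammatic composition in Proposition \ref{EQI} then delivers $(h_t\mu) = (\X_t)_\# (h_0\mu)$. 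Finally, \eqref{Lx} is read as the \emph{definition} $U_t(\X_t) := \partial_t \X_t$, which can be expressed explicitly by a chain rule on $\X_t = \nabla Q_t(\Y_t)$ using \eqref{Ly} and the time derivative of $\nabla Q_t$.

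Part (ii) is then a direct rewriting of \eqref{Ly}: using $\Y_t = \nabla P_t(\X_t)$ from the previous step, $U_{g,t}(\X_t) = fJ(\nabla P_t(\X_t) - \X_t)$, where $P_t$ is the Legendre dual of $Q_t$ (equivalently, the convex Hoskins potential identified above). For (iii), from $Q_t(Y) = \tfrac12\|Y\|^2 - \v_t(Y)$ we obtain $\nabla Q_t(Y) = Y - \nabla \v_t(Y)$, whence $\Y_t - \nabla Q_t(\Y_t) = \nabla \v_t(\Y_t)$ and the first line of \eqref{GSys} reduces to $\partial_t \Y_t = fJ\,\nabla\v_t(\Y_t)$. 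Identifying $\v_t = \partial_\sigma \Ec_{\sigma_t}(h)$ via Proposition \ref{msr}(ii) then produces the compact Wasserstein-gradient form \eqref{FlP}.

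The only genuine obstacle is the regularity needed to manipulate $\nabla P_t, \nabla Q_t$ as classical maps and to differentiate them in $t$; this is exactly what Assumption \ref{csp} secures, by forcing strict convexity of $P_t$ and thus smoothness and bijectivity of the Legendre pair $(P_t,Q_t)$. Granted this regularity, the theorem reduces to carefully assembling Propositions \ref{EQI} and \ref{msr} with the first-order dual conditions baked into \eqref{GSys}; the analytic substance sits in those propositions rather than in the present statement.
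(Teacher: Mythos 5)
Your proposal is correct and follows essentially the same route as the paper: apply Propositions \ref{EQI} and \ref{msr} in sequence, obtain \eqref{cov2} by taking the gradient of the Brenier potential $P_t$ (convex, hence a.e.\ differentiable) while using the duality characterisation $h_t = -\tfrac{f^2}{g}\u_t$, and establish point $iii)$ by taking the gradient of $Q_t$ and identifying $\v_t = \partial_\sigma \Ec_{\sigma_t}(h)$ from Proposition \ref{msr}(ii). You simply spell out more explicitly the chain of identifications that the paper compresses into a few lines.
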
 
\begin{proof}
We apply
Propositions \ref{EQI} and \ref{msr} in sequence. In particular, (\ref{cov2}) is obtained by taking the gradient 
of $P$ (which is convex and therefore a.e-differentiable) defined in 
(iii) of Proposition \ref{msr}, and using the characterisation of $h$ in $(ii)$ of Proposition \ref{msr}. Point $iii)$ is likewise established by taking the 
gradient of $Q$ defined in ($iii)$ of
Proposition \ref{msr}.  
\end{proof} 
\begin{remark}
\label{velo}
The full velocity $ U_t(\X_t)$ (rather than the geostrophic velocity $U_g$) is not needed  to solve (\ref{FlP}), but can be recovered from the solution as $\pp{}{t}\X_t $ as
a postprocessing diagnostic if necessary.
\end{remark}
\begin{remark}
Combining 
Proposition \ref{msr} $(iii)$ with Proposition \ref{EQI} $(ii)$, (\ref{DY}) can be written as 
\begin{align} 
\label{DY2}  
  \pp{}{t} \sigma_t  + \nabla \cdot ( \sigma_t  \, J\cdot  \nabla \, \partial_{\sigma} \Ec_{\sigma_t} ) = 0 
\end{align} 
which satisfies the Wasserstein Hamiltonian system definition of 
\citet{ambrosio2008hamiltonian}. Consequently, the total energy is conserved. Further, the vector field $J\cdot  \nabla \, \partial_{\sigma_t} \Ec_{\sigma_t}$ is divergence free. Then, provided that $Y_t$ remains smooth and invertible, $\pp{}{t} \sigma_t(\Y_t )  = 0 $.
\end{remark}

\section{Entropic 
SWSG approximation and Sinkhorn algorithm} 

In this section, we adapt  the entropic solution 
approach to the incompressible SG equation of \citep{BCM24} to the shallow water SG equation. 
This consists of one simple change: we add an entropic regularisation term to the 
optimal transport problem (\ref{dual2}). It convexifies the problem and enforces the inequality constraint on the potentials in a soft way. A further approximation is required to make a computational method, namely the grid approximation of the potential energy, but we leave this aspect until later. The convergence analysis  of the 
time discretised and entropy regularised problem (as in \cite{carliermalamut24})  is also left for further studies.
In this section, we focus on the solution  of (\ref{OC0}) (and its Sinkhorn Divergence debiased version) using 
the Sinkhorn Algorithm. 

\subsection{Entropic regularisation}
Starting from the primal formulation \eqref{e3}, we replace $\OT(.,.)$ in the kinetic energy with $\OT_\ep(.,.)$ (as defined in section 3.2 in \citet{BCM24}),
for $0< \ep <<1$.
Passing to the dual formulation, \eqref{dual2} is replaced by
the entropy regularised dual  problem,
\begin{equation}
  \label{dualreg} 
  \begin{array}{ll} 
\displaystyle \sup_{ \u \in \Cc(\Omega), \v \in \Cc(\R^d) }  \Dc_{\si,\ep}(\u,\v) :=
  & \displaystyle \int_{\R^d} \v (Y)   \, d\sigma(Y)  -  \dfrac{f^2}{2\,g} \int_\Omega  \| \u (X)  \|^ 2 \, d\mu(X)  -   
 \\[12pt]  & 
  \ep \, \displaystyle \int_{\Omega \times \R^d } 
   \left( e^{(\u(X) +  \v(Y))/\ep}     \,  \Ke(X,Y) -1\right)  \,  d\mu(X) \,d\sigma(Y),
 \end{array}   
\end{equation} 
where we define the heat kernel
\begin{equation}
\label{HK} 
\Ke(X,Y) = e^{-\|X-Y\|^2/\epsilon}.
\end{equation}
The extra third term is strictly convex, ensuring existence and uniqueness.
It also enforces the positivity constraint in (\ref{dual2}) in a soft way.
The solution of the regularised problem \eqref{dualreg} is characterised as follows.

\begin{proposition}
    (\ref{dualreg}) has a unique solution $(\ue, \ve)$  with the following properties.

\begin{itemize}
    \item[(i)]  The optimality conditions for the optimiser are 
 \begin{equation}
 \label{OC0}
 \left\{ 
 \begin{array}{l}
\ve(Y) = -\ep  \, \log \left(  \displaystyle \int_{\Omega \times \R^d }   
   (  e^{\ue(X)/\ep} \,\Ke(X,Y) \,   d\mu(X)  \right), \quad \forall Y, \\[10pt]
\ue(X) = -\ep  \, W_0 \left(  \dfrac{g}{\ep f^2} \displaystyle \int_{\Omega \times \R^d }   
     e^{\ve(Y)/\ep} \,\Ke(X,Y)     \,   d\sigma(Y)  \right)  , \quad \forall X,
 \end{array}
 \right.
 \end{equation}
 where $W_0$ is the $0^{\mbox{th}}$ branch of the Lambert function.

 \item[(ii)] We define the ``entropic'' water height $h_\ep:=- \frac{f^2}{g}  \ue$ .  
 Then   $(\ue, \ve)$ solve
 \begin{equation}
  \begin{array}{ll} 
 \OT_\ep(\sigma , \he \mu) := &\displaystyle \sup_{ \ue \in \Cc(\Omega), \ve \in \Cc(\R^d) } 
 \displaystyle   \int_{} \ve (Y)   \, d\sigma(Y)  + 
  
   \int_\Omega  \ue (X) \,\he(X)  \, d\mu(X)  -   
 \\[12pt]  & 
  \ep \,  \int_{\Omega \times \R^d } 
   ( e^{(\ue(X) +  \ve(Y))/\ep}     \,  \Ke(X,Y) -1)  \,  d\mu(X) \,d\sigma(Y),
 \end{array}   
\end{equation} 
 and $h_\ep$ 
 is the solution of the primal problem 
\begin{equation}
  \label{primalreg} 
\displaystyle \min_h 
 \Ec_{\si,\ep}(h) :=  {\OT_\ep(\sigma,h) + \frac{g}{2f^2} \int \|h \|^2 d\mu}.
 \end{equation} 
 \item[(iii)] The gradient of $\ve$ is also the barycentric map, given by
 \begin{equation}
\label{BDC}
\nabla  \ve(Y) =  \Y-\dfrac{1}{\sigma(Y)}\int_{\Omega \times \R^d }  X \, 
    e^{  (\ue(X) +  \ve(Y))/\ep}  \,\Ke(X,Y)   \,   d\mu(X).
\end{equation}
\end{itemize}
\end{proposition}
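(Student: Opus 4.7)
\emph{Outline.} All three statements flow from first-order optimality of $\Dc_{\sigma,\ep}$, which is strictly concave in $(\u,\v)$ because of the quadratic term $-\frac{f^2}{2g}\int\u^2 d\mu$ and the strict convexity of the exponential integrand on $\mathrm{supp}(\mu\otimes\sigma)$; combined with standard coercivity arguments from entropic OT, this yields a unique maximiser $(\ue,\ve)$. For \emph{part (i)} I compute the Gateaux derivatives of $\Dc_{\sigma,\ep}$. The variation in $\v$ gives, for $\sigma$-a.e.\ $Y$,
\begin{equation*}
1 - \int_\Omega e^{(\ue(X)+\ve(Y))/\ep}\Ke(X,Y)\,d\mu(X) = 0,
\end{equation*}
whose logarithm is exactly the first line of (\ref{OC0}). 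The variation in $\u$ yields, pointwise in $X$,
\begin{equation*}
-\frac{f^2}{g}\ue(X) = e^{\ue(X)/\ep}\,A(X), \qquad A(X):=\int e^{\ve(Y)/\ep}\Ke(X,Y)\,d\sigma(Y).
\end{equation*}
Setting $w:=-\ue(X)/\ep$ converts this into $we^w = \frac{g}{\ep f^2}A(X)$; since $A(X)>0$, the right-hand side lies in $(0,\infty)$, so the principal branch $W_0$ inverts it uniquely and gives the second line of (\ref{OC0}), incidentally confirming $\ue<0$, i.e.\ $h_\ep>0$.

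For \emph{part (ii)} I apply Fenchel/minimax duality. Replacing $\OT_\ep(\sigma,h\mu)$ in (\ref{primalreg}) by its dual representation gives
\begin{equation*}
\min_h\Ec_{\sigma,\ep}(h) = \min_h\sup_{\u,\v}\Bigl\{\int\v\,d\sigma + \int\u h\,d\mu - \ep\int(e^{(\u+\v)/\ep}\Ke - 1)\,d\mu\,d\sigma + \frac{g}{2f^2}\int h^2\,d\mu\Bigr\}.
\end{equation*}
Swapping $\min_h$ with $\sup_{\u,\v}$, legitimate thanks to strict convexity in $h$, concavity in $(\u,\v)$ and coercivity, reduces the inner minimum to a pointwise quadratic whose optimiser is $h = -\frac{f^2}{g}\u$ with value $-\frac{f^2}{2g}\int\u^2 d\mu$. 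What remains is exactly $\Dc_{\sigma,\ep}(\u,\v)$. Hence the primal value coincides with the dual value from (i), and the unique minimiser is $h_\ep = -\frac{f^2}{g}\ue$.

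For \emph{part (iii)} I differentiate the explicit formula for $\ve$ obtained in (i). Because $\nabla_Y \Ke(X,Y)$ is $\Ke(X,Y)$ times a linear function of $X-Y$, the logarithmic derivative simplifies, and the optimality identity $\int e^{\ue/\ep}\Ke\,d\mu = e^{-\ve/\ep}$ collapses the normalising denominator to yield (\ref{BDC}). I expect the \emph{main obstacle} to be rigorously justifying the minimax swap in (ii) in the continuous setting $\u,\v\in\Cc$: this requires compactness or a-priori bounds on the dual potentials, standard in the entropic OT literature but not developed explicitly in this section. The remaining algebraic steps in (i) and (iii) are then routine.
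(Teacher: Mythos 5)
Your proposal is correct and follows essentially the same route as the paper: first-order optimality conditions of the strictly concave dual functional for (i) (with the substitution $w=-\ue/\ep$ giving the Lambert $W_0$ identity), the Legendre--Fenchel/minimax exchange between the quadratic in $h$ and the quadratic in $\u$ for (ii), and differentiation of the first optimality equation combined with the normalisation condition for (iii). The paper's proof simply omits the details you spell out, including the coercivity needed for the minimax swap, which it likewise leaves implicit.
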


\begin{proof}
Property $(i)$ follows directly from the optimality conditions of the 
the strictly concave maximisation (\ref{dualreg}).
For $(ii)$, it is sufficient to plug the 
Legendre-Fenchel dual, 
\begin{align}
&  \frac{1}{2} \| \ue\|^2  = \sup_{h_\ep}
 h_\ep  \ue - \frac{1}{2} \| h_\ep\|^2,
\end{align}
into \ref{dualreg}.
We take the gradient in the first equation of (\ref{OC0}) to get $(iii)$.
\end{proof}
We now gather a few results on the convergence as 
$\ep \rightarrow 0$ of these quantities.

\begin{proposition}[] 
\label{Emsr} 
As $\ep \rightarrow 0$, we have the following properties of convergence to the solution
of the unregularised problem
(which also apply when the measures $\mu$ and $\sigma$ are discrete).
 \begin{itemize} 
\item[(i)] Convergence of the value: Let $( \u_\ep,\v_\ep)$ solve the regularised problem with parameter $\ep$,
and let $(\u, \v)$ solve the unregularised problem. Then
$0 \leq \Dc_\sigma( \u_\ep,\v_\ep) - \Dc_\sigma( \u,\v) \leq C \ep\log(1/\ep)$, for some constant $C$ depending on $\sigma$.

\item[(ii)]
$  \|h_\ep  - h\|_{L^2(\mu)} \leq C' \sqrt{\ep\log(1/\ep)}$ for some constant $C'$ depending on $\sigma$.

\item[(iii)] If the Brenier map from $\sigma$ to $h$ is globally Lipschitz, then $\nabla  \psi_\ep \to \nabla  \psi$ in $L^2(\sigma)$.
\end{itemize} 
\end{proposition}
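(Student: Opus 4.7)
The plan is to treat the three statements in sequence, exploiting the strong convexity of $\Ec_\sigma$ in $h$ (equivalently the strong concavity of $\Dc_\sigma$ in $\u$, with modulus $f^2/g$ in $L^2(\mu)$) together with standard quantitative rates for entropic optimal transport, and then transferring these bounds to the barycentric maps in the last step.

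For (i), I would invoke the now-standard block-approximation estimate (Carlier--Duval--Peyr\'e--Schmitzer), which yields a two-sided bound $|\OT_\ep(\sigma, h\mu) - \OT(\sigma, h\mu)| \leq C_\sigma \, \ep \log(1/\ep)$ for any admissible density $h$ with bounded mass and support; the constant depends on the diameters of $\mathrm{supp}(\sigma)$ and $\Omega$ and is uniform for the discrete case as well. Setting $\Ec_{\sigma,\ep}(h) := \OT_\ep(\sigma, h\mu) + \tfrac{g}{2f^2} \|h\|_{L^2(\mu)}^2$ and using the two optimalities $\Ec_{\sigma,\ep}(h_\ep) \leq \Ec_{\sigma,\ep}(h)$ and $\Ec_\sigma(h) \leq \Ec_\sigma(h_\ep)$, the difference $\Ec_\sigma(h_\ep) - \Ec_\sigma(h)$ is sandwiched between $0$ and $2C_\sigma \ep \log(1/\ep)$. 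Strong duality (Proposition \ref{msr}(ii)) then translates this into the advertised bound on $\Dc_\sigma(\u_\ep,\v_\ep) - \Dc_\sigma(\u,\v)$.

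For (ii), I would note that $\Ec_\sigma$ is $\tfrac{g}{f^2}$-strongly convex in $h$ for the $L^2(\mu)$ norm, since the kinetic part $f^2\,\OT(h\mu,\sigma)$ is convex in $h$ while the potential term $\tfrac{g}{2f^2}\int h^2 d\mu$ supplies the quadratic lower bound. Evaluating strong convexity at the minimiser $h$ gives
\begin{equation*}
\Ec_\sigma(h_\ep) - \Ec_\sigma(h) \geq \frac{g}{2f^2} \|h_\ep - h\|_{L^2(\mu)}^2,
\end{equation*}
and combining with (i) yields $\|h_\ep - h\|_{L^2(\mu)} \leq C' \sqrt{\ep \log(1/\ep)}$, equivalent (via $h_\ep = -(f^2/g)\u_\ep$) to the same rate for $\u_\ep - \u$.

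For (iii), Proposition \ref{msr}(iii) identifies $\nabla \psi$ as the Brenier map from $\sigma$ to $h\mu$, while formula (\ref{BDC}) shows $\nabla \psi_\ep$ is the entropic (barycentric) transport map associated with the regularised problem whose target is $h_\ep\,\mu$. I would prove $L^2(\sigma)$ convergence by a triangle inequality, comparing $\nabla \psi_\ep$ first to the entropic barycentric map with frozen target $h\,\mu$ (controlled by the $L^2$ marginal convergence (ii) via stability of entropic Kantorovich potentials under marginal perturbation) and then comparing this frozen-target entropic map to the true Brenier map $\nabla \psi$ (controlled by quantitative entropic-to-Brenier convergence, e.g.\ in the spirit of Pooladian--Niles-Weed). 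The global Lipschitz hypothesis on the Brenier map is what converts $L^2$ marginal stability into $L^2$ map stability in the second comparison. The main obstacle, and the reason for the Lipschitz hypothesis, is precisely this last step: the target marginal is itself varying with $\ep$, so one must combine two quantitative estimates whose constants depend on regularity on both sides, and only the Lipschitz control on $\nabla \psi$ closes the argument uniformly in $\ep$.
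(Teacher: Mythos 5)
Your handling of (i) and (ii) is essentially the paper's proof. For (i), the paper obtains the lower bound from monotonicity of $\OT_\ep$ in $\ep$ rather than from optimality of $h$, and uses only the one-sided Carlier-et-al.\ estimate; your final ``strong duality translates this'' step is slightly imprecise (you have bounded $\Ec_\sigma(h_\ep)-\Ec_\sigma(h)$, while the stated quantity is the difference of dual values, which equals $\Ec_{\sigma,\ep}(h_\ep)-\Ec_\sigma(h)$), but the same ingredients repair it. For (ii), your argument — $g/f^2$-strong convexity of $\Ec_\sigma$ evaluated at the minimiser $h$, combined with (i) — is exactly the paper's.

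For (iii) your route diverges and has a genuine gap. The paper does not introduce a frozen-target intermediate map. It writes $\nabla\psi(Y)=Y-\nabla Q(Y)$, applies Jensen to the barycentric formula \eqref{BDC} to get $\|\nabla\psi_\ep-\nabla\psi\|_{L^2(\sigma)}^2\le\int|X-\nabla Q(Y)|^2\,d\gamma_\ep$, and then invokes the Gigli/Minty-type inequality (following Berman and Li--Nochetto)
\begin{equation*}
\frac{|X-\nabla Q(Y)|^2}{2L}\;\le\;\frac{|X-Y|^2}{2}-\phi(X)-\psi(Y),
\end{equation*}
valid because $\nabla Q$ is $L$-Lipschitz. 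Integrating against the entropic plan $\gamma_\ep$ and letting $\ep\to0$, the right-hand side tends to $\OT_0(\sigma,h)-\OT_0(\sigma,h)=0$: the $\phi$-marginal $\int\phi\,h_\ep\,d\mu$ converges by Cauchy--Schwarz using (ii), and the $\psi$-marginal is exact since $\gamma_\ep$ has second marginal $\sigma$. This is direct and self-contained. Your plan instead leans on (a) stability of the entropic barycentric map under $L^2$ perturbation of the target density, and (b) quantitative entropic-to-Brenier convergence in the style of Pooladian--Niles-Weed. Step (a) is not an off-the-shelf result, and known stability estimates for entropic potentials typically have constants degrading as $\ep\to0$ (often like $1/\ep$), so you would need to verify compatibility with the $O(\sqrt{\ep\log(1/\ep)})$ rate from (ii). Step (b) generally requires regularity beyond the global Lipschitz hypothesis (e.g.\ densities bounded above and below, $C^{2,\alpha}$ Brenier potentials). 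The Gigli/Minty trick is precisely what circumvents both issues; its absence is the missing idea.
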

\begin{proof}
  \begin{itemize}
\item[(i)] By duality, we have that 
\begin{equation}
\label{opti}
    \Dc_\sigma(\phi,\psi) = \min_h \Ec_{\sigma}(h) \quad \text{and} \quad \Dc_\sigma(\phi_\ep,\psi_\ep) = \min_h \Ec_{\sigma,\ep}(h).
\end{equation}

Since the entropic optimal transport cost $\OT_\ep$ is increasing in $\ep$, it is clear that $\Ec_{\sigma,\ep}$ in \eqref{primalreg} is also increasing in $\ep$.
Thus, the minima in $h$ described in \eqref{opti} are also ordered according to 
\[\Dc_\sigma(\phi,\psi) \leq \Dc_\sigma(\phi_\ep,\psi_\ep). \] 
For the second inequality, choose $h$ as a candidate in the minimisation of $\Ec_{\sigma,\ep}$. Then, equation \eqref{opti} gives $\Dc_\sigma(\phi_\ep,\psi_\ep) \leq \Ec_{\sigma,\ep}(h)$. So,
\[
\Dc_\sigma(\phi_\ep,\psi_\ep) - \Dc_\sigma(\phi,\psi) \leq \Ec_{\sigma,\ep}(h) - \Ec_{\sigma}(h) = \OT_\ep(\sigma,h) - \OT_0(\sigma,h),
\]
and it is known that this suboptimality converges in $\ep\log(1/\ep)$ (see \citet{carlier2017convergence}). \\
\item[(ii)] The function $\Ec_{\sigma}$ is $\frac{g}{f^2}$-strongly convex, with minimum at $h$, so 
\[\left\|h - h_\ep \right\|_{L^2(\mu)}^2\leq \frac{2f^2}{g}\left(\Ec_{\sigma}(h_\ep) - \Ec_{\sigma}(h)\right).\]
This left hand side is equal to $\Dc_\sigma(\phi_\ep,\psi_\ep) - \Dc_\sigma(\phi,\psi)$, which is dominated by $C\ep\log(1/\ep)$ as seen above. So 
\[ \left\|h - h_\ep \right\|_{L^2(\mu)} \leq C'\sqrt{\ep\log(1/\ep)}.
\]
\item[(iii)] First, note that, as a result of the relation between the Kantorovich potential $\psi$ and the Brenier potential $Q$, $\nabla \psi(Y) = Y -\nabla Q(Y) $. Combining this fact with relation \eqref{BDC} yields
\[
\nabla\psi_\ep(Y) -\nabla\psi(Y) = \nabla Q(Y) - \dfrac{1}{\sigma(Y)}\int_{\Omega \times \R^d }  X \, 
    e^{  (\ue(X) +  \ve(Y))/\ep}  \,\Ke(X,Y)   \,   d\mu(X).
\]
By Jensen's inequality,
\[ \| \nabla \psi_\ep - \nabla \psi \|_{L^2(\sigma)}^2 \leq \int | X - \nabla Q(Y) |^2 d \gamma_\ep(X,Y),
\]
where $d\gamma_\ep(X,Y) := e^{  (\ue(X) +  \ve(Y))/\ep}  \,\Ke(X,Y)   \,  d\mu(X)d\sigma(Y)$ is the entropic optimal transport plan between $h_\ep$ and $\sigma$ (see \cite{PeyreBook} for more information). The measure $\gamma_\ep$ has support on the product space $\Omega \times \R^d$ and marginals $h_\ep,\sigma$.

Arguing as  in \cite{berman2020stability} and  \cite{liNochetto2020stability}, who build upon an earlier argument of \cite{gigli2011mintytrick}, we can use the following inequality,
\[
\frac{| X - \nabla Q(Y) |^2}{2L} \leq \left( \frac{|X-Y|^2}{2} - \phi(X) - \psi(Y) \right),
\]
where $L$ is the Lipschitz constant of the Brenier map, so that
\begin{align} \nonumber
  \frac{\| \nabla \psi_\ep - \nabla \psi \|_{L^2(\sigma)}^2}{2L} \leq & \int \frac{|X-Y|^2}{2}d\gamma_\ep(X,Y) \\
  & \qquad - \underbrace{\int \phi(X) d\gamma_\ep(X,Y)}_{\int \phi(X) h_\ep(X) d\mu(X)} -\underbrace{\int \psi(Y) d\gamma_\ep(X,Y)}_{\int \psi(Y) d\sigma(Y)}.
  \label{eq:Lino}
\end{align}
The first term converges to $OT_0(\sigma,h)$. Since $\phi = -\frac{2g}{f^2}h$ is bounded in $L^2(\mu)$ and $h_\ep$ converges to $h$ in $L^2(\mu)$, then
\[
\int \phi h_\ep d\mu  \to \int \phi h d\mu,
\]
by the Cauchy–Schwarz inequality.
By Kantorovich duality,
\begin{align}
  \nonumber
  \int (\phi h_\ep)(X) d\mu(X) + \int \psi(Y) d\sigma(Y) &\to \int (\phi h)(X) d\mu(X) + \int \psi(Y) d\sigma(Y) \\
  & = OT_0(\sigma,h).
\end{align}
Hence, the right-hand side of equation \eqref{eq:Lino} tends to 0, and so does the left-hand side.
\end{itemize}
\end{proof}

The computation of $(\ue,\ve)$  relies 
on an iterative relaxation of (\ref{OC0}), in the manner of Sinkhorn.
In our setting this amounts to the following.
\begin{equation}
 \label{SINK}
 \left\{ 
 \begin{array}{l}
\ve^{k+1}(Y) = -\ep  \, \log \left(   \displaystyle \int_{\Omega \times \R^d }   
    e^{  \ue^k(X)  /\ep}  \,\Ke(X,Y)     \,   d\mu(X)  \right), \quad \forall Y, \\[10pt]
\ue^{k+1}(X) = -\ep  \, W_0 \left(  \dfrac{g}{\ep f^2}  \displaystyle \int_{}   
   e^{  \ve^{k+1}(Y)  /\ep}   \,\Ke(X,Y)    \,   d\sigma(Y)  \right)  , \quad \forall X.
 \end{array}
 \right.
 \end{equation}
The convergence  proof of this variant of Sinkhorn is available in a 
more general setting in \citet{chizat2018scaling}.
The proof in \citet{dimarinogerolin} can also be 
adapted. We note that replacing the $\log$  by the Lambert 
function in the second equation actually improves the contraction rate of the method. 

\begin{remark}
From a computational perspective, it is important to notice that the formula \eqref{BDC}, giving both the transport map and flow speed, does not involve derivatives, providing a natural extension of the gradient when $\mu$ is a discrete probability  measure.
\end{remark}

\subsection{Debiasing $\OT_\ep$ with  with Sinkhorn divergence }

The Entropic regularisation introduces an $\ep$ dependent bias in 
the kinetic energy $\Ec_{K,\sigma} = f^2 \,\OT $ and associated Wasserstein gradient 
$\nabla  \partial _\sigma \Ec_{K,\sigma} $ (this is (\ref{BDC})). 
A debiased version, referred to as ``Sinkhorn divergence'', has been 
proposed \citep{genevay,Feydy,chizatfaster,pooladian22}, giving the following 
correction of $\OT_\ep$,
\begin{equation}
 \label{SD}
 \Sc_\ep( h\,\mu, \sigma) =  \OT_\ep( h\,\mu, \sigma) - \dfrac{1}{2} \left( 
 \OT_\ep( h\,\mu, h\, \mu) + \OT_\ep( \sigma, \sigma) \right). 
 \end{equation}
 Further, Theorem 1 of \citet{Feydy} shows that $\Sc_\ep$ is positive and convex in its two variables and continuous w.r.t the Wasserstein topology. It is also built to recover the natural identity 
$\Sc_\ep(\nu,\nu)=0$, which holds for $\OT$ but does not hold for $\OT_\ep$.
Based on the small $\ep$ asymptotic expansion of 
$\OT_\ep$ \citep{carlier2017convergence,conforti2019formula,pal2019difference},
$\Sc_\ep$  is an approximation of $\OT$ of order $O(\ep^2)$  compared 
with $O(\ep\,\log{\ep})$ for $\OT_\ep$ \citep{Feydy,chizatfaster}.
This has been established rigorously in the continuous setting under technical assumptions on the marginal measures, such as compact support.
Given both marginal measures, $\Sc_\ep$ makes a better proxy of the unregularised 
Wasserstein distance and is still easy to compute, with just three independent $\OT_\ep$ problems to solve and combine. 

Going further, the debiased entropic map, given by  $\sigma$ Wasserstein gradient   $\nabla \partial_\sigma \{ \OT_\ep( h\,\mu, \sigma) -  \OT_\ep( \sigma, \sigma)/2 \}$, is computable at the same cost.  In our setting, this means replacing
 $ \nabla \v$ by 
$ \nabla  \left( \ve -  \ve^S \right)$ where $\ve^S$ is the Kantorovich potential 
solution of the dual formulation of the symmetric $\OT_\ep( \sigma, \sigma)$ problem.    As $\sigma$ is given, this is easily 
and independently 
computed  by the  Sinkhorn iteration,
\begin{equation}
\label{PSIS}
\ve^{S,k+1}(Y) = -\ep \log\left( 
\SI{ e^{(  \ve^{S,k}(Y'))/\ep} \,\Ke(Y',Y)  }{ d\sigma(Y') }
\right),
\end{equation} 
followed by applying the barycentric map formula \eqref{BDC} to the resulting potential $\ve^{S,\star}$ at convergence $k\to \infty$,
 \begin{equation}
\label{BDC2}
\nabla  \ve^{S,\star}(Y) =  Y - \dfrac{1}{\sigma(Y)}\int_{\Omega \times \R^d }  X \, 
    e^{  (\ve^{S,\star}(X) +  \ve^{S,\star}(Y))/\ep}  \,\Ke(X,Y)   \,   d\mu(X).
\end{equation}

Based again on the asymptotic characterisation in the continuous setting of \cite{conforti2019formula}, \cite{pooladian22} show that the correction does not degrade the accuracy 
of the approximation for smooth continuous transport maps. The bias remains of order $O(\ep)$ in $L^2$ norm.
They also show that discretisation by a weighted sum of Dirac masses may degrade the correction. 
Nevertheless, our experiments (see later sections) with our discretisation of 
the SWSG shows that debiasing 
improves the solution, at least for the water height.
For samplings of compactly supported marginal measure in particular, the Sinkhorn divergence map corrects at least the entropic regularisation induced contraction at the boundary \citep{Feydy},
which we otherwise observed to be significant in our experiments.

\subsection{Debiasing the Entropic SWSG problem} 

The Entropic approximation of SWSG consists of replacing $\OT$ by $\OT_\ep$ in (\ref{e3}-\ref{energies}) and then $\nabla \v_t $ by $\nabla \v_{t,\ep}$
(pointwise in $t$) in \ref{FlP}.  
Debiasing the Entropic SWSG problem consists of replacing $\OT_\ep$ by $\Sc_\ep$  and then  $\nabla \v_{t,\ep}$ by $ \nabla  \left( \v_{t,\ep} - \v^S_{t,\ep} \right) $.  
Since we minimise over $\he$ (\ref{e3}), the debiasing symmetric part $ \OT_\ep( \he\,\mu, \he\, \mu)$  is coupled to the entropic kinetic energy $\OT_\ep( \sigma, \he\, \mu) $ and the potential energy $\Pc(\he)$.  Thus, unlike the other symmetric part $ \OT_\ep( \sigma, \sigma)$, it cannot be computed independently. 
Below, we develop iterative methods for this new problem. \\

First, we recall that  while the  convexity of $\OT_\ep( \he\,\mu, \sigma) $  in $\he$ is well known, the less obvious  convexity of the negative symmetric terms has been established 
 using the  change of variable $ \us = e^{{\ue^S}/{\ep}} $ in the dual formulation ($\ue^S$ being the classical Kantorovich potential),
 \begin{align}
 \label{CVC}
 -\OT_\ep( \he\,\mu,  \he\,\mu) =   \ep \min_{ \us \in \Cc^+(\Omega)}   
& - \SI{\he(X)\, \log(\us(X))}{d\mu(X)} - \frac{1}{2}  +  \\ 
& \frac{1}{2}  \SI{\us(X) \, \us(X') \,  \,\Ke(X,X') }{d\mu(X) \,d\mu(X')} .
 \nonumber \end{align}
Propositions 3 and 4 of \cite{Feydy} assure the positivity of the optimal $\us$.
Therefore, we can  use (\ref{CVC}) inside the Sinkhorn divergence (\ref{SD}) and remove the constant symmetric $\sigma$ part, before plugging the result  in place of  $\OT$ in (\ref{energies}). 
 We arrive at the convex/concave saddle point problem,
\begin{align}
 \label{SPP}
 \inf_{\he\in \P(\Omega), \us \in \Cc^+(\Omega)}   \, \sup_{ \ue,\ve}
&  \quad F(\he,\us,\ue,\ve),
\end{align}
where
\begin{align}
F(\he,\us,\ue,\ve) := & \SI{\ve(Y)}{d\sigma(Y)} + \SI{\ue(X) \, h(X) }{d\mu(X)} \, +  \nonumber \\[8pt]   & 
- \ep 
\SI{\he(X)\,  \log(\us(X))}{d\mu(X)}  \, -  \nonumber  \\[8pt]   &
\ep \,  \SI{  \left( e^{(\ue(X) +  \ve(Y))/\ep}  \,\Ke(X,Y)-1 \right)  }{ d\mu(X) \, d\sigma(Y) }
    \,  +  \nonumber \\[8pt]    &
\dfrac{\ep}{2}\, \SI{\us(X) \, \us(X') \,  \,\Ke(X,X') }{d\mu(X) \,d\mu(X')}
 \,  +  \nonumber \\[8pt]      &
 \dfrac{ g}{2} \, \SI{  \| \he  (X)  \|^ 2}{d\mu(X)},
  \nonumber
 \end{align}
where the Coriolis parameter is  set to $f=1$  and we have added/removed constants to ease the computations.

 Existence and uniqueness of a saddle point  solution to the system 
 \begin{equation}
 \label{Saddle}
   \nabla_{\he,\us,\ue,\ve} \, F(\he,\us,\ue,\ve) =0   
 \end{equation}
 follows from the strict convexity/concavity of $F$ (see  
 \cite{ekeland1976convex} chap.6 for example)  as well as the convergence of  steepest ascent-descent gradient methods of the form
 \begin{align}
& \label{AD1} 
(\he^{k+1},\us^{k+1})  = (\he^{k},\us^{k}) - t \, \nabla_{\he,\us} \, F(\he^k,\us^k,\ue^k,\ve^k),  \\[8pt]
& \label{AD2} 
(\ue^{k+1},\ve^{k+1})  = (\ue^{k},\ve^{k}) + t \, \nabla_{\ue,\ve} \, F(\he^k,\us^k,\ue^k,\ve^k), 
 \end{align}
 with  relaxation parameter $t$ chosen such that  
 $0< t < \frac{\lambda^2}{2\, L} $ 
 (in finite dimension, $L$ the Lipschitz constant of $F$ and $\lambda$ bounding below the modulus of convexity/concavity). There exist many refinements of this method in the literature (See  
\cite{Mokhtari20} and references therein).
 
\subsection{A Sinkhorn approach for the solution of (\ref{Saddle}) }

We rewrite (\ref{Saddle}) explicitly as the system 
\begin{align}
\label{DPSI}
& 1 = \SI{ e^{(\ue(X) +  \ve(Y))/\ep} \,\Ke(X,Y)}{d\mu(X) ,} \\[8pt] 
\label{DPHI}
& h(X) = \SI{ \left( e^{(\ue(X) +  \ve(Y))/\ep}   \,\Ke(X,Y) \right)}{d\sigma(Y) } \\[8pt]
& u(X) =  e^{ \frac{\ue(X)  + g\, h(X)}{\ep}  }  , \label{DH}  \\[8pt]
& h(X)  =  u(X) \, \SI{ u(X') \,  \,\Ke(X,X') }{d\mu(X')}  . \label{DU}
\end{align}

Since (\ref{DH}-\ref{DU}) arise  from a strictly convex problem, ithe system has a unique solution  $(\he[\ue],\us[\ue])$ depending on $\ue$, but it cannot be solved explicitly. Formally, eliminating $(\he,\us)$ yields
\begin{align}
 \label{DUALSreg}
 \sup_{ \ue,\ve}
& \SI{\ve(Y)}{d\sigma(Y)}  +\J_h(\ue) -
\ep \,  \SI{ \left( e^{(\ue(X) +  \ve(Y))/\ep} \,\Ke(X,Y) -1 \right) } {   d\mu(X) \, d\sigma(Y) },
 \end{align}
and
\begin{equation}
    \label{EH} 
    \J_h(\ue) = -  \dfrac{ g}{2} \, \SI{  \| h[\ue] (X)  \|^ 2}{d\mu(X)} \,-  \,\dfrac{\ep}{2} \,\SI{ h[\ue](X)}{ d\mu(X)}. 
\end{equation}
We do not have proof of 
concavity in $\ue$ of this new problem.
The convergence 
of a coordinatewise ascent in $(\ue,\ve)$  algorithm similar to
 Sinkhorn is unclear and is left for further research. \\

Instead, we develop a heuristic 
iterative relaxation method ``\`a la Sinkhorn''  to solve 
the optimality system (\ref{DPSI}-\ref{DU}), and provide experimental 
evidence of convergence for it.
We notice that $-\ue$ and $g\,\he$ are comparable (and equal when $\ep =0$), whilst $\us$ depends on the symmetric potential.
 It  is therefore tempting to eliminate $\he $ using (\ref{DH}) in (\ref{DPSI}-\ref{DU}). 
We find (taking the $\log$ when necessary),
\begin{align}
\label{DP2} 
& \ve(Y) = -\ep \, \log\left(  
\SI{ 
e^{ \ue  /\ep} \,\Ke(X,Y)}{d\mu(X) }
\right)  \, ,  \\[6pt]
\label{DF2} 
&   \ue(X) = - \ep \, W_0 \left( \frac{g}{\ep} \, u(X) \, 
 \SI{  e^{   \ve(Y))/\ep} } {  \Ke(X,Y) \,  d\sigma(Y) }
\right)    \, ,    \\[8pt]
& g\, u(X)  = \dfrac{ \ep \, \log(u(X)) -\ue(X)}{ \SI{ u(X') \,  \,\Ke(X,X') }{d\mu(X')} } . \label{DU2}
\end{align}

If $u=1$ then (\ref{DP2},\ref{DF2}) is exactly (\ref{OC0}), so we suggest to try and adapt Sinkhorn. This leads us to the following 
iterative algorithm,
\begin{align}
\label{DP2S} 
& \vekp(Y) = -\ep \, \log\left(  
\SI{  e^{ \uek(X)  /\ep} \,\Ke(X,Y)}{d\mu(X) }
\right)  \, ,  \\[6pt]
&   \uekp(X) = - \ep \, W_0 \left( \frac{g}{\ep} \, u^k(X) \, 
 \SI{  e^{   \vekp(Y))/\ep} } {  \Ke(X,Y) \,  d\sigma(Y) }
\right)    \, ,    \\[8pt]
& g\, u^{k+1}(X)  = \dfrac{  \ep \, \log(u^k(X)) -\uekp(X)}{ \SI{ u^k(X') \,  \,\Ke(X,X') }{d\mu(X')} }.
\end{align}
Numerically, we observe a better convergence 
compared with (\ref{AD1}-\ref{AD2}). We also note that 
Sinkhorn does not require a choice of relaxation parameter $t$. 

\section{Numerical study}

\subsection{Discretisation and Algorithm}

The physical domain $\Omega$  is discretised using a time independent 
  (i.e. Eulerian) regular Cartesian grid $(X_i)_{i \in \llbracket 1, N \rrbracket }$. We make the approximation $ \mu = \frac{1}{N}\sum_i \delta_{X_i}$  and 
\begin{equation} 
\label{ED}
(h_t \mu )=    \frac{1}{N} \sum_i  h_{t,i} \, \delta_{X_i}, 
\end{equation} 
where $ (h_{t,i})_i$ is the height $X$-grid function.  Given a
continuous initial height measure $h_0$, the initial geostrophic
measure is $\sigma_0 = (Id+ \frac{g}{f^2} \, \nabla h_0)_\# \, h_0 \,
\mu $.  This is approximated as
\begin{equation} 
\label{s0} 
\sigma_0 =  \frac{1}{N} \sum_i  h_{0,i} \, \delta_{\Y_{0,i}}, 
\mbox{  where $\Y_{0,i}  = \{ Id+ \frac{g}{f^2} \,  \nabla h_0  \} (X_i)$.}
\end{equation}

 The family $(\Y_{0,i})_i$  specifies the initial position of the Lagrangian particles $(\Y_{t,i})_i$ following  the flow (\ref{FlP}). This flow is the  Lagrangian discretisation for $\sigma_t  = (\Y_t)_\# \sigma_0 $ according to
 \begin{equation}
 \label{LD}
 \sigma_t =  \frac{1}{N} \sum_i  h_{0,i} \, \delta_{\Y_{t,i}}.
 \end{equation}
The dynamics of the particles is given by the debiased velocity (\ref{FlP}),
\begin{equation}
\label{FlP2} 
\pp{}{t}\Y_{t,i}  = f\, J \cdot  \left\{  \nabla \v_{t,\ep}(\Y_{t,i})  - \nabla \v^S_{t,\ep}(\Y_{t,i}) \right\},
\end{equation} 
 for all $i$. Here, for all $t$, $(\nabla \v_{t,\ep}, \nabla \v^S_{t,\ep})$ is given statically  by (\ref{BDC}) and (\ref{BDC2}). The discretised potentials in the expression of the maps are given by the Sinkhorn solutions of  (\ref{SPP}) and (\ref{PSIS}) with marginals given and 
discretised as (\ref{ED}-\ref{LD}).  \\

Finally the system of ODEs (\ref{FlP2}) is discretised and integrated in time using a time stepping schemes, e.g. Euler, Heun (RK2) or RK4.
Figure (\ref{convergence_in_timestepping}) shows the number of Sinkhorn iterations to obtain converged potentials at successive time steps. 
When using the previous time step solutions as a warm initialisation, we see improved convergence.

\begin{figure}[h]
    \centering
    \includegraphics[width=\linewidth]{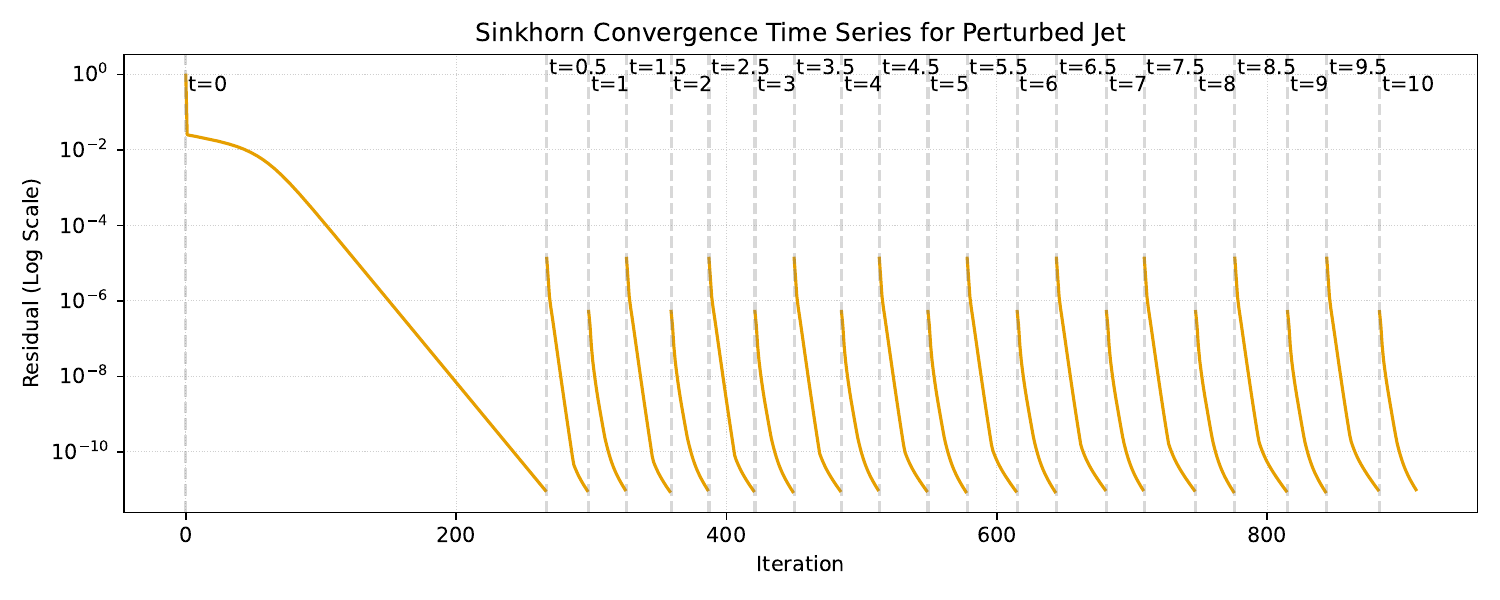}

    \caption{Illustration of the convergence of the Sinkhorn iterates for the perturbed jet case (see below) with \(dt=0.1, \varepsilon=0.01, \alpha = 0.001\) and the reinitialisation strategy we employ, which recycles the potentials from the previous iterations (when including dynamics). The index \(t\) represents the number of time steps in our Sinkhorn iterates. The residual is measured as \(\max\{|\phi^{k+1} - \phi^{k}|_{\infty}, |\psi^{k+1} - \psi^{k}|_{\infty}\}\) at iteration $k$, with the iteration terminating after the residual goes below a tolerance (1e-11 here). Timestep values halfway between integers correspond to the second intermediate stage in the two stage Runge-Kutta method used.}
    \label{convergence_in_timestepping}
\end{figure}

\subsection{Test case descriptions}
In this section we describe the test problems that we used in our numerical study, focussed on jet solutions in a periodic domain.
The 2D (horizontal) domain $\Omega$ \( = [0,1]\times[0,1]\) is periodic in the \(x_1\) direction, and we use a suitable periodic 
(squared) distance in the formulation of the optimal transport problem,
\begin{equation}
\label{HK2} 
c(x,y) = \min \left\{ |x_1 -y_1 |^2, |x_1 -y_1+1 |^2, |x_1 -y_1-1 |^2 \right\}  + |x_2-y_2 |^2.
\end{equation}
Before each optimal transport calculation, particles are remapped into the primary domain $0\leq x_2 \leq 1$.
Rigid boundary conditions on the bottom  and top  $x_2=0,1$ boundaries
are implicitly enforced by the transport target domain.  \\

In all cases we select units with Coriolis and gravity constants \(\hat{f}=1, \hat{g}=0.1\). 
These are appropriately chosen to occupy the SG rotationally dominated regime, and correspond to Burger numbers and Rossby numbers both equal to 0.1, which is required to be small for SWSG.

Our first testcase uses  
a  $x_1$-independent $h_0$ initialisation that satisfies CSP (assumption \ref{csp}). It generates a  stationary (in time) jet height  profile that remains in geostrophic balance. Indeed, the geostrophic speed (\ref{eq:geobal}) is 
the rotation of the $h$ gradient and therefore $u_{2,g}$ vanishes  and $u_{1,g} (x_1,x_2)  = C(x_2) $ 
is constant in $x_1$.  The density of $\sigma_0$ is therefore also independent of $x_1$ as well as the two transport maps $\nabla P_0$ and $\nabla Q_0$. Speeds and  densities are constant in $x_1$, so the $x_1$ independence is preserved in time by the geostrophic dynamics.  Geostrophic particles retain their initial $x_1$ independent speed, there is no  
acceleration, hence $U = U_g$( as per Equations (\ref{eq:3d sg u}-\ref{eq:3d sg v})). 
Our stationary jet is obtained from the initial height,
\begin{align}  
h_0(x_1, x_2) = a \tanh(b(x_2-c)) + d ,
\end{align}
with damping term, \(a\), slope scale term, \(b\), shift away from zero, \(c\), average height, \(d\). 
Here, Cullen Stability Principle holds provided that  \(\frac{3 \sqrt{3} f}{4 g} > ab^2\). 
We consider two cases, a shallow jet and a steeper jet. For both cases, \(a=0.1, c=0.5, d=1.0\), but \(b=10\) for the steeper jet and \(b=5\) for the shallow jet. 
This height field yields a pressure gradient and a subsequent stationary jet. 

Our second  test case corresponds to a perturbation of the initial height with a  Gaussian bump,
\begin{align} 
    h_0(x_1, x_2) = a \tanh(b(x_2-c)) + d + \frac{\alpha}{2 \pi \sigma_0^2}e^{-\frac{1}{2\sigma_0^2} \, 
    | x_1 - \mu_1|^2 + |x_2 - \mu_2| ^2 } \label{eq:perturbed_jet_height_function}.
\end{align} 
Throughout we fix \(\mu_1=0.5, \mu_2=0.3, \sigma_0 = 0.1, \alpha = 0.001\).
Now necessarily  \(u_{2,g} \neq 0 \), and
instabilities will form leading to waves and eventually front formation.  Cullen stability principle holds provided \(\sigma_0, \alpha\) are chosen to keep \(P\) convex. For this, it is sufficient to choose \(\alpha\) small; here we take \(\sigma=0.1\), and \(\alpha=0.001\).
The profile begins as a stationary jet while the perturbation grows, leading to the formation of waves.
Eventually, these waves break, forming fronts.

\begin{figure}[h]
    \centering
    \includegraphics[width=0.9\linewidth]{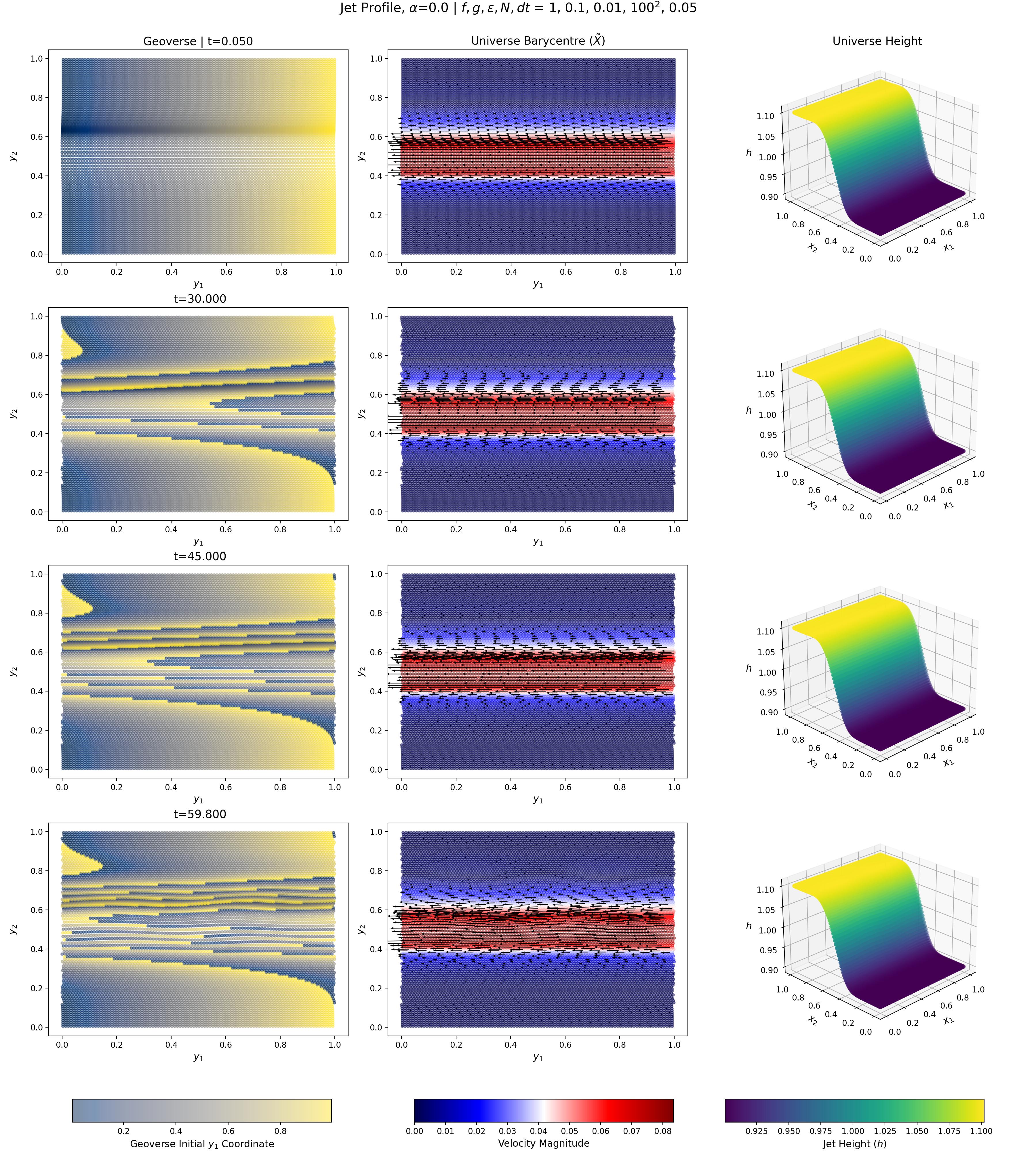}
    \caption{Stationary Jet profile, with parameters (a,b,c,d) = (0.1, 10, 0.5, 1.0). The Figure illustrates the integration (using Heun's stepping) at \(t = 0, 30, 60\). The left column shows the (diagnostic) points in physical space, coloured by their initial \(y_1\) position illustrating mixing over time. The middle column displays the corresponding barycentric projection and points are coloured by the approximate universe velocity from a 2nd order finite difference method, with illustrative quivers. 
    The right column shows the reconstructed height fields.
    }
    \label{jet_figure}
\end{figure}

\begin{figure}[h]
    \centering
    \includegraphics[width=0.9\linewidth]{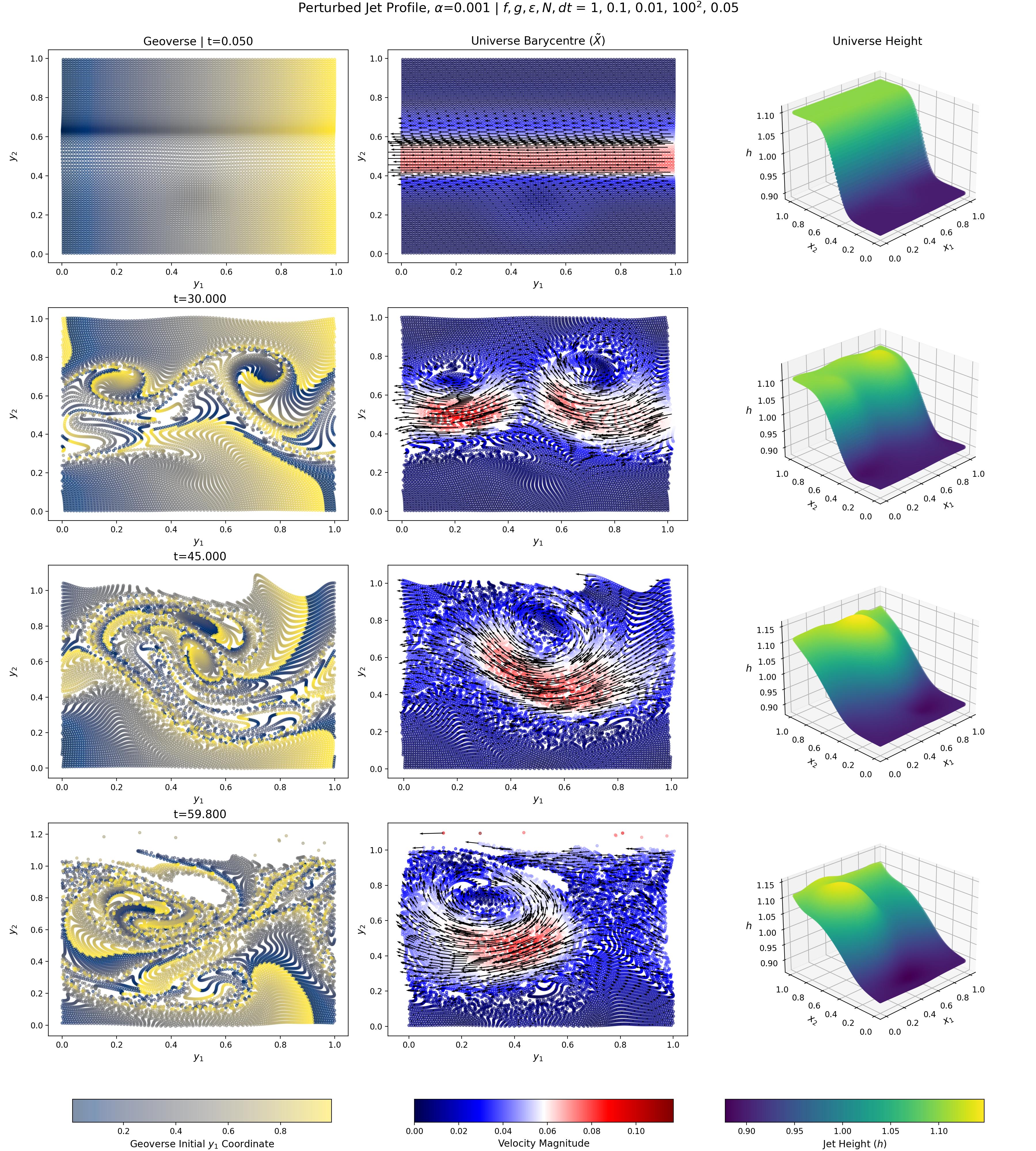}
    \caption{Perturbed jet profile, with parameters (a,b,c,d,) = (0.1, 10, 0.5, 1.0) and (\(\mu_{x_1}, \mu_{x_2}, \sigma, \alpha\)) = (0.5,0.3,0.1,0.001). The Figure illustrates the integration (using Heun's stepping) at \(t = 0, 30, 45, 60\). The left column shows the (diagnostic) points in physical space, coloured by their initial \(y_1\) position illustrating mixing over time. The middle column displays the corresponding barycentric projection and points are coloured by the approximate universe velocity from a 2nd order finite difference method, with illustrative quivers. 
    The right column shows the reconstructed height fields for the perturbed jet, where fronts start to form. 
    }
    \label{perturbed_jet_figure}
\end{figure}

\begin{figure}[h]
    \centering
    \includegraphics[width=1.0\linewidth]{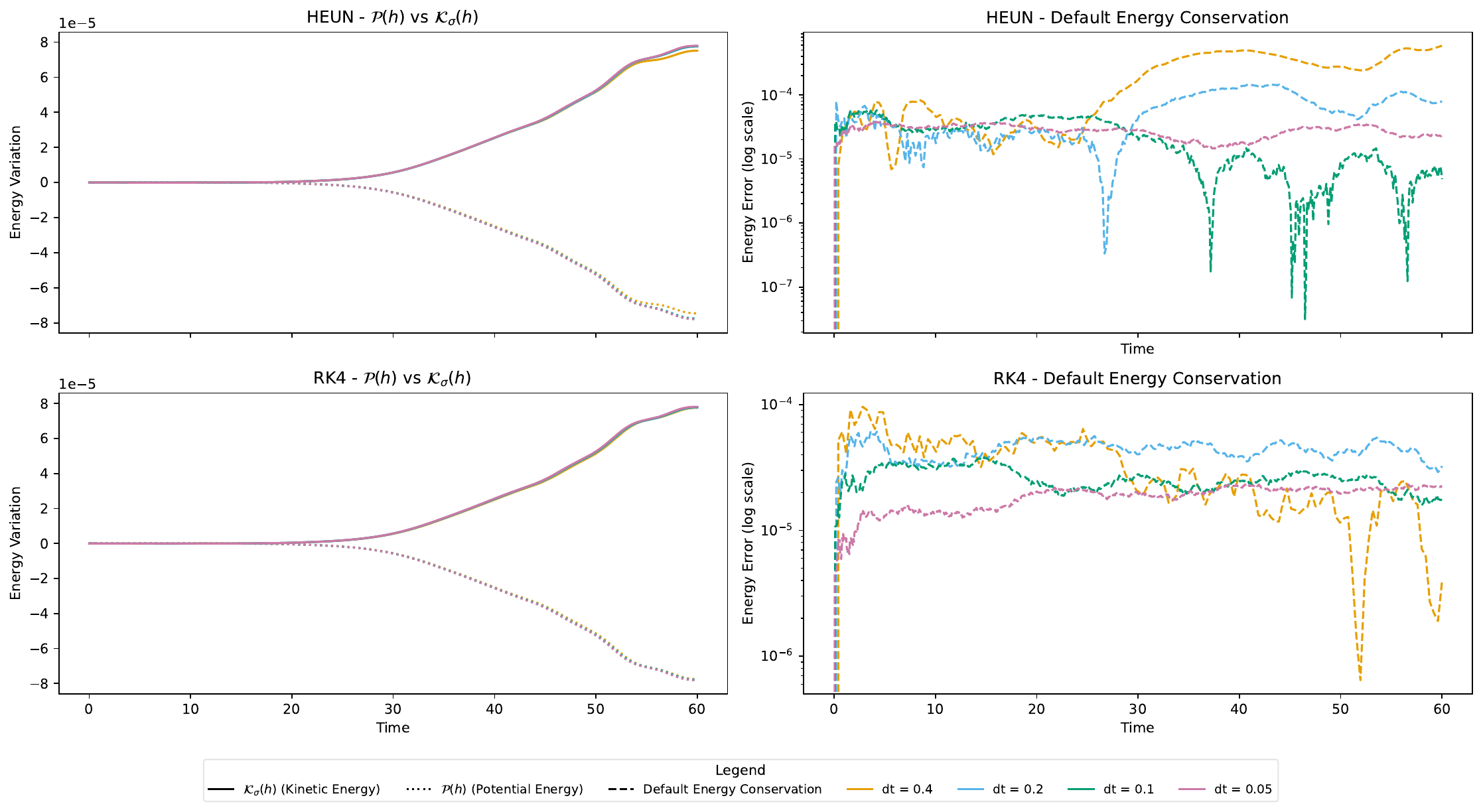}
    \caption{Energy conservation test for the perturbed jet, using the debiased Sinkhorn divergence. The rows compare two time-stepping schemes: Heun (second-order Runge-Kutta) and classical Runge-Kutta 4. The left column shows how potential and kinetic energy vary from their initial values over time. All four time steps closely overlap. The right column plots the absolute, normalised default energy error (See equation \ref{eq:default_energy}) on a semi-logarithmic scale. The entropy contribution, not shown here, stays below the set tolerance of $1e-11$. The energy variation is defined as the current energy (kinetic or potential) minus the initial energy.  
    }
    \label{energy_conservation_for_perturbed_jet}
\end{figure}


\begin{figure}[h]
    \centering
    \includegraphics[width=1.0\linewidth]{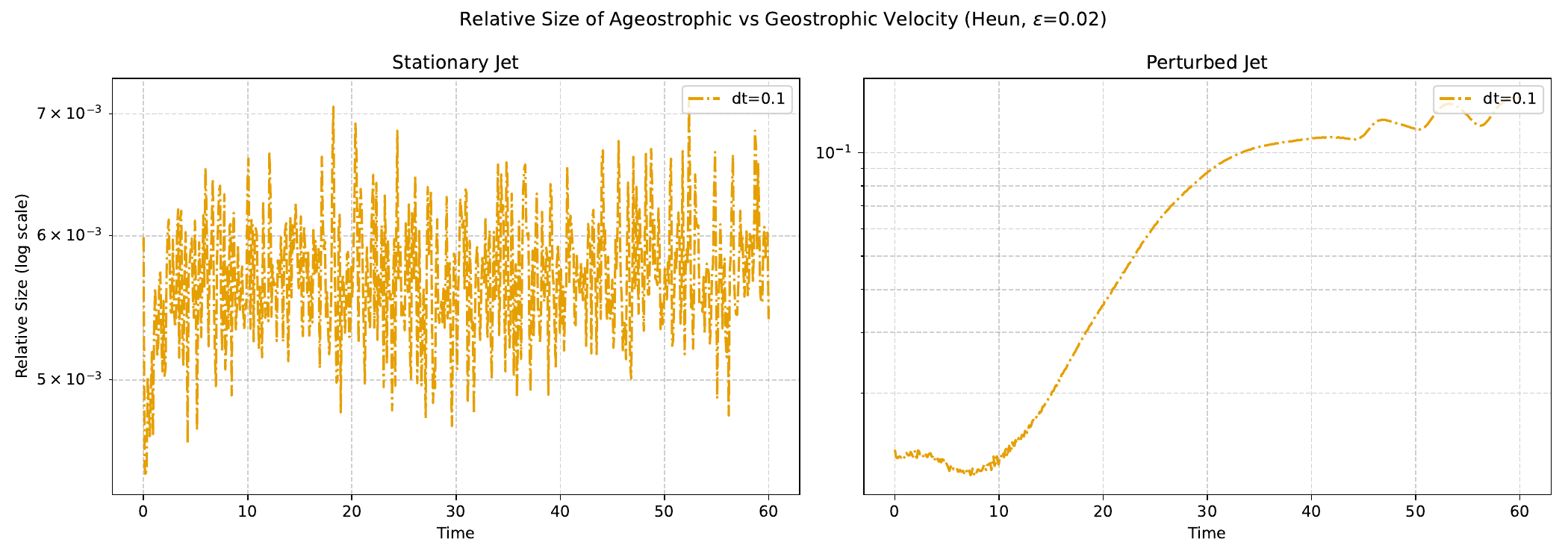}
    \caption{Time evolution of the relative size of ageostrophic/geostrophic velocities  the jet and perturbed jet 
    (see section \ref{age}).
    }
    \label{ageostrophic_error_for_jet}
\end{figure}

\subsection{Numerical results}
\label{ne}
First we investigate the accuracy of the reconstructed height field corresponding to the steady initial jet profiles.
At time $0$, the data $\sigma_0$ is constructed from the analytically provided initial height $h_0$, see (\ref{s0}).
Provided it satisfies CSP, the solution of  (\ref{e3}) $(\u,\v)$ reconstructs  $h_0(X)$ as $-f^2\,\u(X)/g $  and the geostrophic velocity in geostrophic coordinates 
$U_{g,t}(\nabla Q_0(X_t))  =  -g/f \, J \, \nabla h_0(Y :=X+\nabla \, h_0(X))$ as $ J \, \nabla \psi(Y)$. \\

Figure \ref{fig:height_stationary_error} 
shows the height error, for different initialisations and as a function of $\ep$ ($N$ given as above), when using $OT_\ep$ as an approximation of $W_2^2$, 
\[
E^h_{OT_\ep} = \sqrt{S_{0.01}(\frac{1}{N}\sum_{i=1}^N( - \u_{\ep,i} \delta_{X_{i}}, \frac{1}{M} \sum_{k=1}^M {h_{0,k} \delta_{X_{k}})} },
\]
 and 
\[
E^h_{\Sc_\ep} = \sqrt{\Sc_{0.01}(\frac{1}{N}\sum_{i=1}^N( (\u^S_{\ep,i} -\u_{\ep,i} )\delta_{X_{i}}, \frac{1}{M} \sum_{k=1}^M {h_{0,k} \delta_{X_{k}})} }
\]
when using the debiased $\Sc_{\ep}$.
 We measure the errors using a fixed $\ep = 0.01$ Sinkhorn divergence $\Sc_{0.01}$  as a loss between these discrete measures.
 In both cases, the exact $h_0$ is approximated on a fine grid ($M=640^2 >>N$). Debiasing yields lower errors but does not seems to improve the order of convergence, which is roughly $O(\ep^{1.5})$.  The steeper (harder) jet profile produces larger errors as expected. \\

In Figure \ref{fig:jet_phases}, we investigate the approximation error in $\ep$ at time 0 for the velocity of the geostrophic flow $\pp{}{t}\Y_t  = U_{g,t}(\nabla Q_0^{-1}(\Y_t))$. Since we use a Lagrangian discretisation, we use the transport loss $\Sc_{0.01}$ as a proxy for 
$W_2^2$ to measure errors, but this time in the (4D) phase space 
$(\Y_0, U_{g,0}(\Y_0) ) $. The plotted errors, for different initialisations are
\[
E^U_{OT_\ep} = \sqrt{\Sc_{0.01}\left(\frac{1}{N}\sum_{i=1}^N( h_{0,i}\, \delta_{ (\Y_{0,i}, - \frac{g}{f}\, J \cdot \nabla \v_{0,\ep}(\Y_{0,i}))} , \frac{1}{M} \sum_{k=1}^M h_{0,k} \, \delta_{ ( \nabla Q_0(\X_{0,k}), U_{g,0}(\X_{0,k}) )} \right)} ,
\]
when using $OT_\ep$ as an approximation of $W_2^2$, and 
\begin{align*}
E^U_{\Sc_\ep} &= \sqrt{\Sc_{0.01}( \mu_N, \nu_M )}, \\
\mu_N &= \frac{1}{N}\sum_{i=1}^N( h_{0,i}\, \delta_{ (\Y_{0,i}, - \frac{g}{f}\, J \cdot \nabla \left( \v_{0,\ep}(\Y_{0,i})) -\v^S_{0,\ep}(\Y_{0,i})) \right)} , \\
\nu_N &= \frac{1}{M} \sum_{k=1}^M h_{0,k} \, 
\delta_{ ( \nabla Q_0(\X_{0,k}), U_{g,0}(\X_{0,k}) )}, \\
\end{align*}
when using the debiased $\Sc_{\ep}$. The fine  $Y_{0,k}$ discretisation 
is constructed via Hoskins' transformation of a fine $X_{k}$ grid.
The debiased  version achieves a lower error but convergence rate is now $O(\ep^{0.75})$ except for the shallow jet case with a {weaker} gradient.

\begin{figure}
    \centering
    \includegraphics[width=0.5\linewidth]{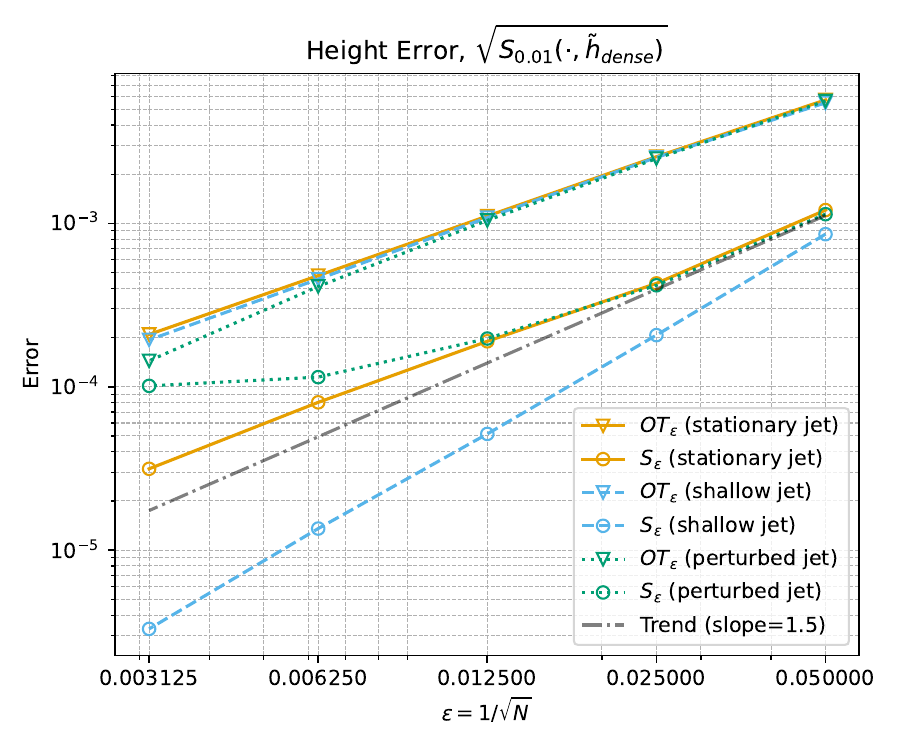}
    \caption{Convergence plots $\ep \rightarrow 
    E^h_{OT_\ep}$ and  $\ep \rightarrow E^h_{\Sc_\ep}$ (see section \ref{ne}) at time step $0$ for different initialisations. Note that the shallow jet is also stationary.} 
    \label{fig:height_stationary_error}
\end{figure}

\begin{figure}[h]
    \centering
    \begin{minipage}[b]{0.333333\textwidth}
        \centering
        \includegraphics[width=\textwidth]{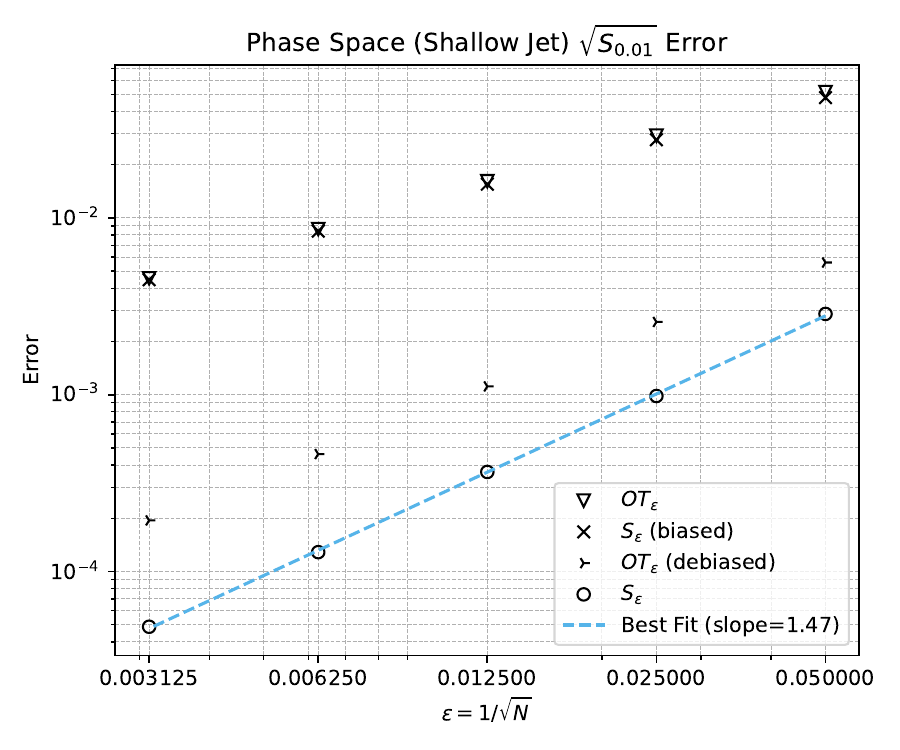}
    \end{minipage}\hfill
    \begin{minipage}[b]{0.333333\textwidth}
        \centering
        \includegraphics[width=\textwidth]{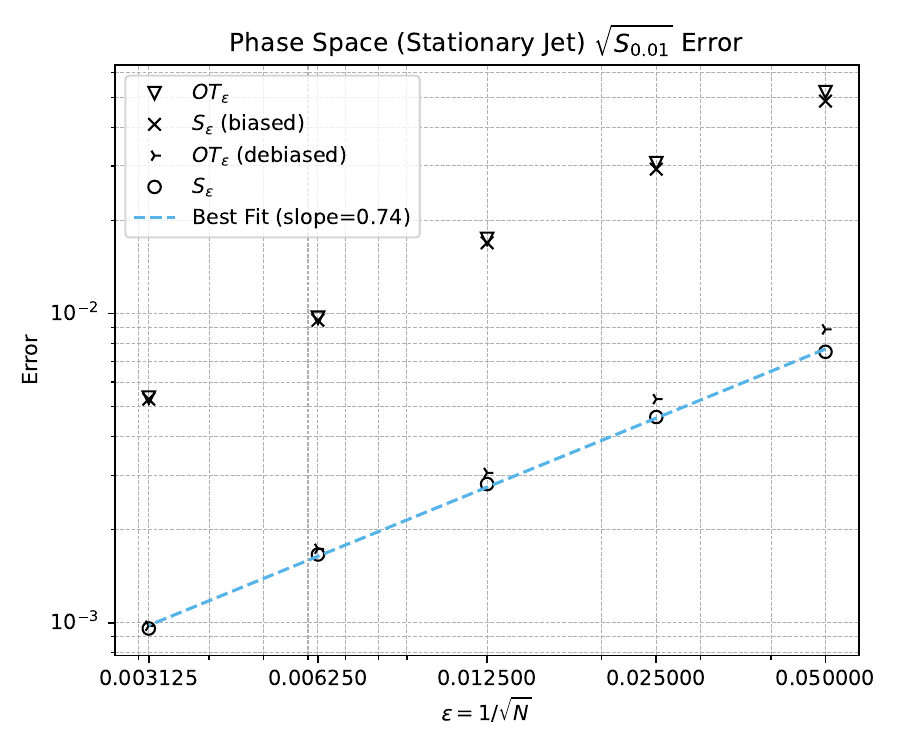}
    \end{minipage}\hfill
    \begin{minipage}[b]{0.333333\textwidth}
        \centering
        \includegraphics[width=\textwidth]{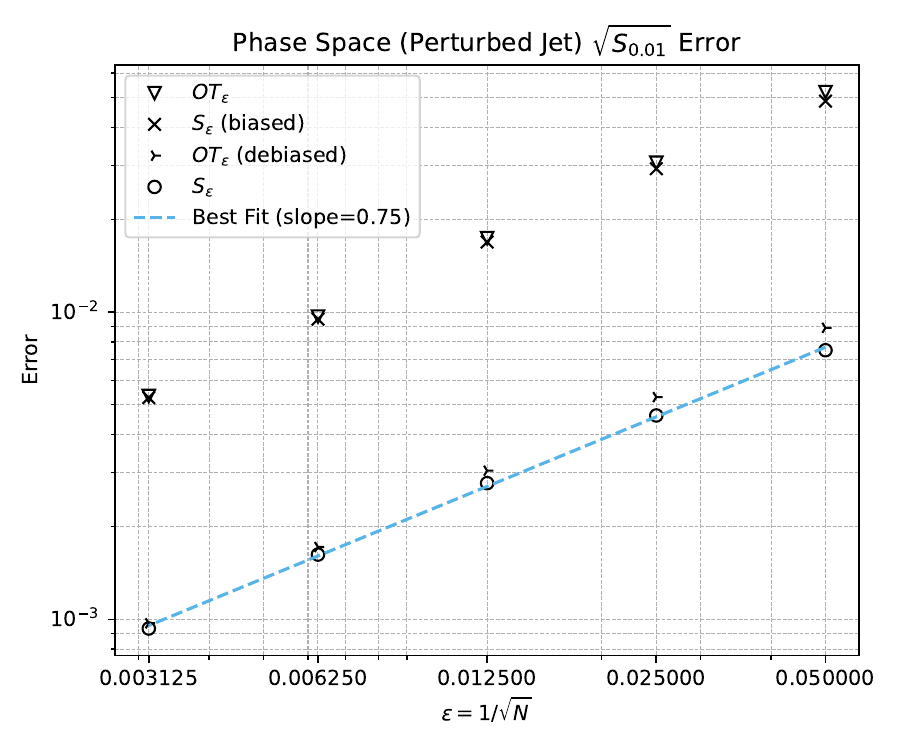}
    \end{minipage}
    \caption{
   Convergence plots $\ep \rightarrow 
    E^U_{OT_\ep}$ and  $\ep \rightarrow E^U_{\Sc_\ep}$ (section \ref{ne}) at time step $0$ for different initialisations. Note that the shallow jet is also stationary.}
    \label{fig:jet_phases}
\end{figure}

Figure \ref{jet_figure} demonstrates the stationary jet with geostrophic velocities (shown by the colour and quivers).
In the central column the barycentric projection into the physical domain shows how physical trajectories are formed, and then the 3D plot illustrates the reconstructed height profile of the tanh curve.
Clearly the stability of the jet is maintained with only small oscillations around the base state observed, due to asymmetries from the Lagrangian discretisation.

Figure \ref{perturbed_jet_figure}  demonstrates the nonstationary behaviour of the perturbed jet, leading to front-like structures.
This is evident in the final two frames, where the large velocities demonstrate the front being supported by strong geostrophic winds. 
This behaviour is also reflected in the height profile, where a bulge moves along the underlying perturbed jet.

To begin studying the correctness of these numerical solutions, we consider if the total energy is conserved, and explore its breakdown into potential and kinetic energy. The stable jet approximately conserves energy with variations on the scale of 1e-7 (not shown).
Figure \ref{energy_conservation_for_perturbed_jet} illustrates shows the energy exchange and conservation for the perturbed jet, when using the debiased Sinkhorn divergence methodology, in terms of
the normalised energy
\begin{align}
    \frac{\mathcal{E}_{\sigma}(h_t) - \mathcal{E}_{\sigma}(h_0) }{\mathcal{E}_{\sigma}(h_0) - \mathcal{E}_{\mathcal{U}} , }\label{eq:default_energy}
\end{align}
where \(\mathcal{E}_{\mathcal{U}}\) is the minimum background energy corresponding to a uniform height profile with no velocity. The spatial semidiscretisation conserves energy exactly, 
so any energy errors will arise from time discretisation, or by truncation of the iterative scheme to find the optimal transport solution.
Hence, we vary the size of time steps and the stepping scheme (2nd order Heun or 4th order Rk4).
Crucially, the system appears to converse its energy with variations in  the default energy conversation on the order of 1e-5.
There is marginal improvement in conservation from Heun to RK4, but a greater contrast in stability of energy is through smaller time stepping.
Overall all approaches keep the relative energy error small, around 1e-4. 

We remark that the method generates two approximations of the height. 
The (Eulerian, prognostic) grid function $(h_{t,i})_i$ on the right  in Figure \ref{perturbed_jet_figure}.  
A (diagnostic) weighted point cloud Lagrangian sampling, corresponding to the push-forward of geostrophic density sampling (\ref{LD}) by the barycentric map approximating (\ref{A1}),
given by
\begin{equation}
\label{rch}
   \frac{1}{N} \sum_i  h_{0,i} \, \delta_{ \X_{t,i} = \Y_{t,i} + \nabla \left( \v_{t,\ep}(\Y_{t,i})  - \v^S_{t,\ep}(\Y_{t,i}) \right) },
\end{equation}
is shown in the middle panel of 
 Figure \ref{perturbed_jet_figure},
The weights are fixed at initialisation and cannot capture 
large density variations. This is seen in particular in the last two 
 snapshots where  a ``hole'' appears in the domains in the last two 
 snapshots. This occurs because of the contractive nature of the barycentric mapping which is a combination of finite size effects and the entropic regularisation. 
 We emphasise that these reconstructions of Lagrangian trajectories are purely diagnostic, and are not involved in the SG solution algorithm; the true map is best approximated 
 by the many-to-many map defined by the discrete optimal coupling $\pi$.

\label{age} The SG approximation of the SW equations neglects the acceleration of the 
ageostrophic component of the velocity 
$U_{t,ag} := U_t - U_{t,g}$ (see Section \ref{sgsw}). In Figure  \ref{ageostrophic_error_for_jet} we 
measure numerically  the time evolution of \(\norm{U_{t,ag}}_2 / \norm{U_{t,g}}_2 \), 
the relative size of the ageostrophic versus geostrophic velocity.
The full velocity needed to compute $U_{t,ag}$ is approximated via
second order central finite difference in time
\(U_t(\X_t) = (\X_{t+1} - \X_{t-1})/({2 dt})\) (see remark \ref{velo}).
For the stationary jet (on the left) the velocity is purely geostrophic 
(see the discussion on the initialisation above) and for the perturbed jet (on the right) the ratio 
remains around 0.1.

At a later positive time $t$ we do not have a 
a reference analytical solution for the 
non stationary  perturbed jet. Instead,
we examine the ``pseudoconvergence" towards a fine grid solution. Proposition \ref{Emsr} provides convergence results in the continuous case but little information on the convergence rates. 
We follow \citet{berman} (as discussed in \citet{BCM24} in section 4.2), parameterising the space discretisation $N$ with $\epsilon$ according to  $N= 1/\varepsilon^2 $. 
 We stop the Sinkhorn iterations  when $\L^\infty$ difference of $1e-11$ in the potential increments is reached.

The fine grid solution corresponds to $N_0 = 2^{16}$. Figure \ref{fig:perturbed_sigma_convergence} shows convergence 
for the discrete  geostrophic density (\ref{LD}),
\[
E^\sigma_{OT_\ep,\Sc_\ep}(\sigma_t^N)  = \sqrt{\Sc_{0.01}(\sigma_t^N, \sigma_t^{N_0}) }.
\]
The added  $N$ upper script on $\sigma_t^N$ clarifies the 
the dependence on the discretisation, again
using the Sinkhorn divergence loss as a proxy for $W_2$.
Both the entropic $(OT_\ep)$ and debiased entropic $(\Sc_\ep)$ are tested.  

Figure \ref{fig:s_eps_pseudo_convergence} shows the pseudoconvergence in the reconstructed height on the $X$ grid, comparing the use of $ h^N = -f/g  \, \ue^N$  for the entropic $OT_\ep$ with $ h^N = -f/g  \, (\ue^N - \ue^{S,N}) $ for the $\Sc_\ep$ debiased approach. The fine reference solution $h^{N_0}$ is computed with the debiased approach. Since the $X$ grid is structured, we can also compute an $L^p$ loss combined with interpolation in addition to the $\Sc_{0.01}$ transport loss. 
We observe a pseudoconvergence of order $O(\ep^{3/2})$,
suggesting that the discretisation error dominates in the 
$t=0$ convergence against the exact solution (Figure \ref{fig:jet_phases}). We also remark that debiasing improves the accuracy but not the order of convergence.
This was already observed for the SG Eady slice test case 
\cite{BCM24}. 
Heuristically, it can be explained  by noticing that the 
transport correction of the symmetric part in \ref{SD} is asymptotically irrotational as $\ep \to 0$ (see remark 6 in \cite{BCM24}).

 \begin{figure}
     \centering
     \includegraphics[width=0.5\linewidth]{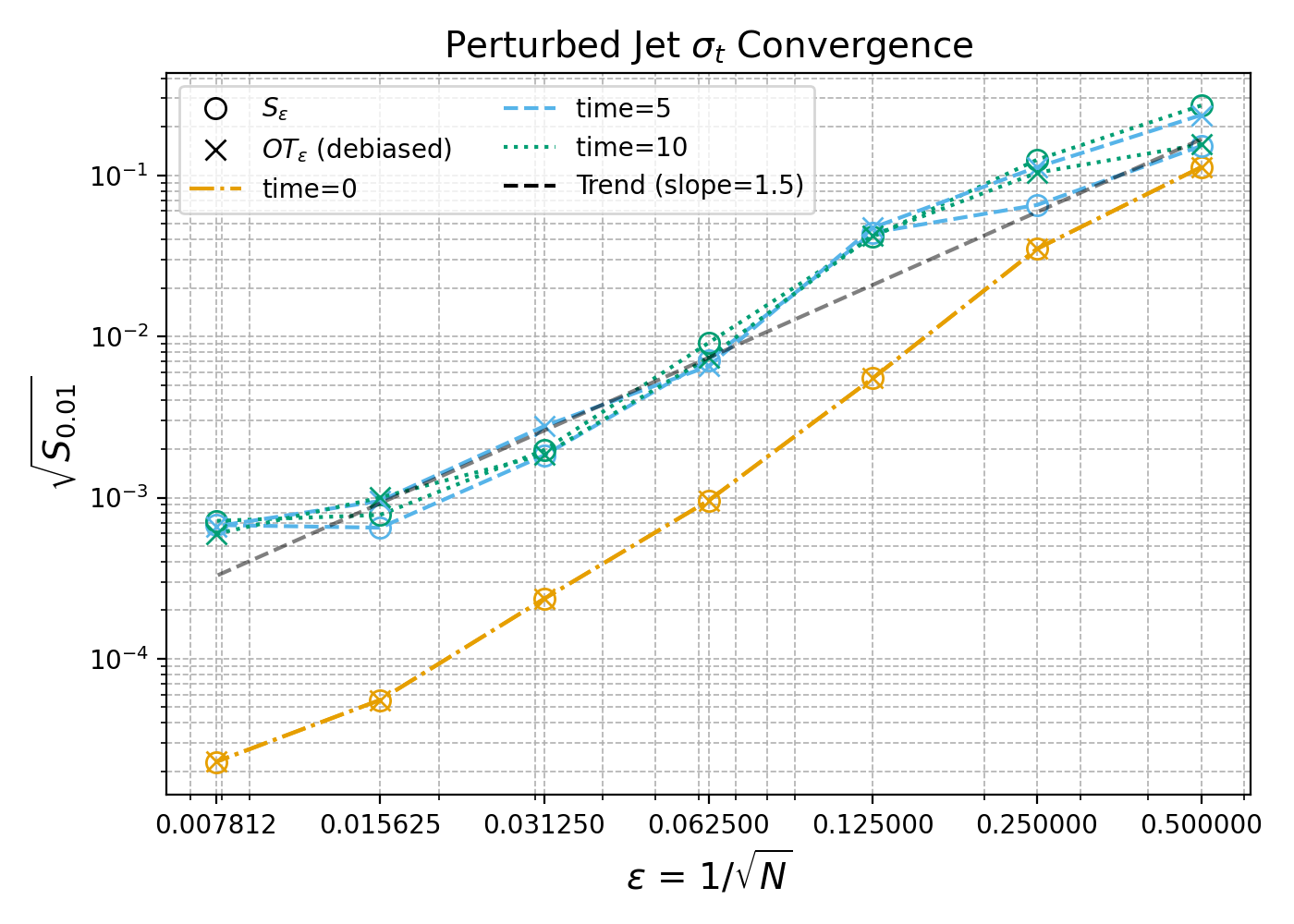}
     \caption{Pseudo convergence against a fine solution for the perturbed jet, this is 
     $E^\sigma_{OT_\ep,\Sc_\ep}(\sigma_t^N)$ for $N \nearrow 2^{16}$ and different times (section \ref{ne}). }
     \label{fig:perturbed_sigma_convergence}
 \end{figure}

\begin{figure}
    \centering
    \begin{subfigure}{0.49\linewidth}
        \centering
        \includegraphics[width=\linewidth]{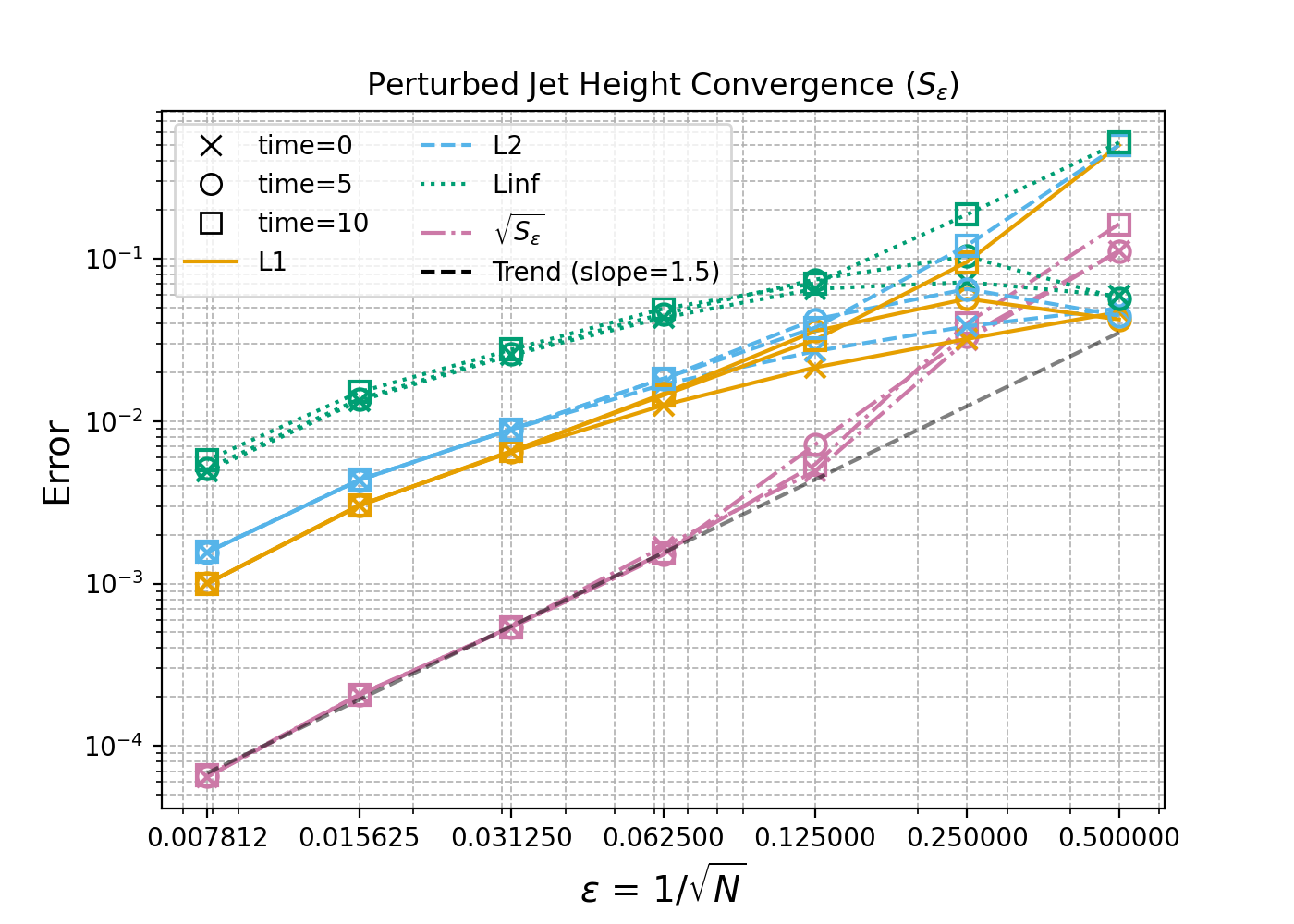}
    \end{subfigure}
    \begin{subfigure}{0.49\linewidth}
        \centering
        \includegraphics[width=\linewidth]{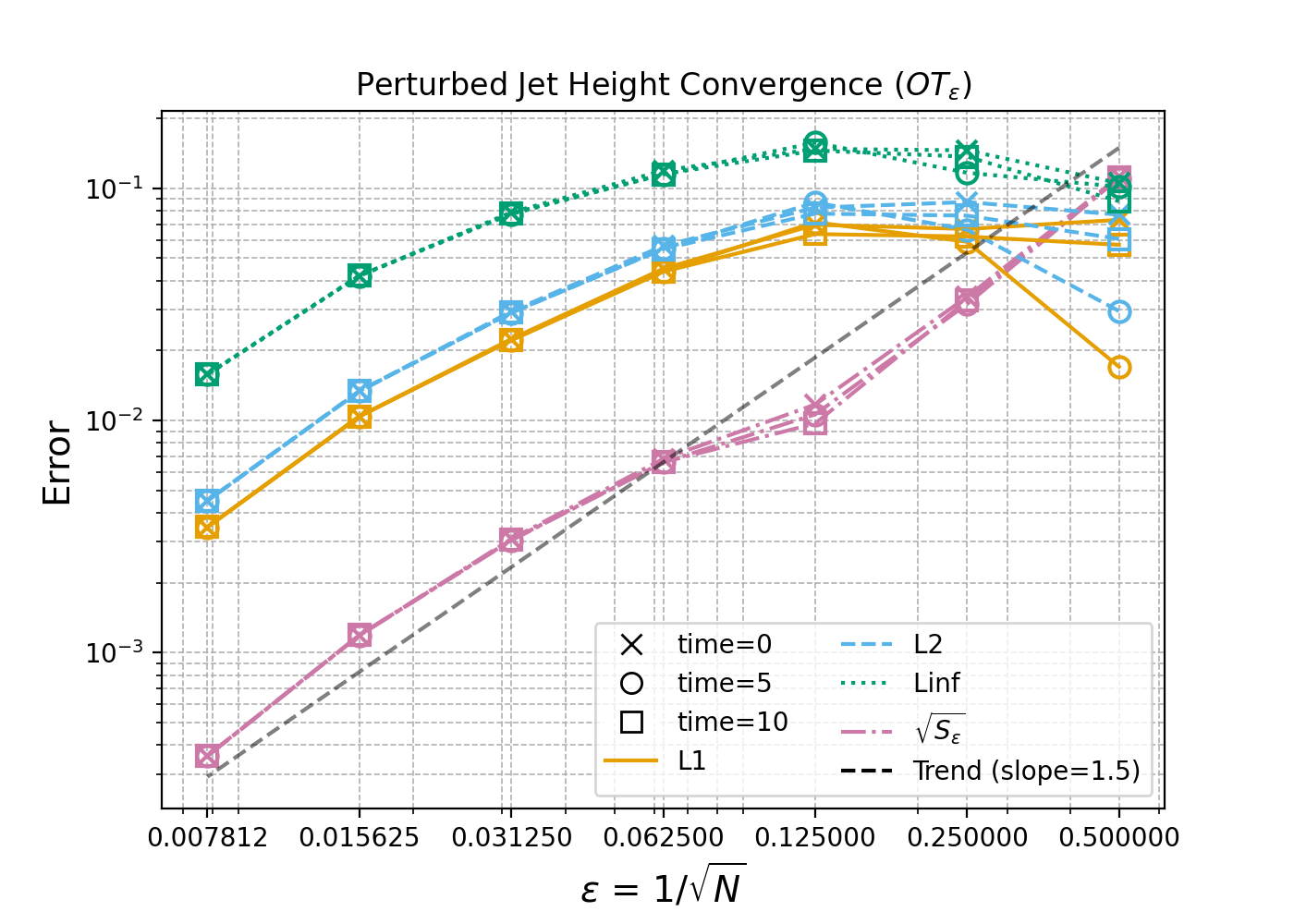}
    \end{subfigure}
    \caption{Same as Figure \ref{fig:perturbed_sigma_convergence} but for the reconstructed height and different losses.
(section \ref{ne}). } 
    \label{fig:s_eps_pseudo_convergence}
\end{figure}

\section*{Acknowledgements} 
JD Benamou gratefully  acknowledges the support of the
CNRS Imperial College Abraham de Moivre International Laboratory Fellowship.
J.J.M. Francis gratefully acknowledges the support of this work by the Natural Environment Research Council [grant number NE/S007415/1] and their CASE Studentship "Transport methods for the verification of Numerical Weather Prediction (NWP) forecasts" at the Met Office.

\bibliography{references,references_JD}

\appendix
\renewcommand{\thesection}{S\arabic{section}}
\renewcommand{\thesubsection}{S\arabic{section}.\arabic{subsection}}
\renewcommand{\thesubsubsection}{S\arabic{section}.\arabic{subsection}.\arabic{subsubsection}}

\renewcommand{\thefigure}{S\arabic{figure}}
\renewcommand{\thetable}{S\arabic{table}}
\renewcommand{\theequation}{S\arabic{equation}}

\setcounter{section}{0}
\setcounter{figure}{0}
\setcounter{table}{0}
\setcounter{equation}{0}

\end{document}